\newtheorem{theorem}{Theorem}
\newtheorem{lemma}{Lemma}
\newtheorem{corollary}{Corollary}
\newtheorem{proposition}{Proposition}
\begin{document}

\title[Sobolev estimates for the complex Green operator]{Sobolev estimates for the complex
Green operator on CR submanifolds of hypersurface type}

\author{Emil J.~Straube}
\author{Yunus E.~Zeytuncu}

\address{Department of Mathematics Texas A\&M University College Station, Texas, 77843}
\address{Department of Mathematics and Statistics, University of Michigan-Dearborn, Dearborn, Michigan, 48128}
\email{straube@math.tamu.edu}
\email{zeytuncu@umich.edu}

\thanks{2000 \emph{Mathematics Subject Classification}: 32W10, 32V20}
\keywords{Complex Green operator, CR-submanifold of hypersurface type,
$\overline{\partial}_{M}$, Sobolev estimates, plurisubharmonic defining functions}
\thanks{Research supported in part by NSF grant DMS 0758534.}

\date{November 13, 2013; revised October 30, 2014}

\begin{abstract}
Let $M$ be a pseudoconvex, oriented, bounded and closed CR submanifold of $\mathbb{C}^{n}$ of hypersurface type. Our main result says that when a certain $1$--form on $M$ is exact on the null space of the Levi form, then the complex Green operator on $M$ satisfies Sobolev estimates. This happens in particular when $M$ admits a set of plurisubharmonic defining functions or when $M$ is strictly pseudoconvex except for the points on a simply connected complex submanifold.
\end{abstract}

\maketitle

\section{Introduction}\label{intro}

A (connected) CR submanifold $M$ of $\mathbb{C}^{n}$ is called of hypersurface type if at
each point of $M$, the complex tangent space has co--dimension one inside the real tangent
space.
The $\overline{\partial}$--complex in $\mathbb{C}^{n}$
induces the (extrinsic) $\overline{\partial}_{M}$--complex on $M$ (\cite{Boggess91,
ChenShaw01}). Throughout this paper, we assume that $M$ is orientable and compact
(without boundary). We will study
Sobolev estimates for the $\overline{\partial}_{M}$--complex and the associated complex
Green operator on (oriented) pseudoconvex CR submanifolds of hypersurface type.

Subelliptic estimates for the complex Green operator on (abstract) CR manifolds go
back to \cite{Kohn65} when $M$ is strictly pseudoconvex and to \cite{Kohn85} when $M$ is
of finite type. Compactness estimates for $\overline{\partial}_{M}$ on compact CR
submanifolds of hypersurface type are more recent and may be found in
\cite{RaichStraube08, Raich10, Straube10, MunasingheStraube12}. But Sobolev estimates
hold in many situations where compactness fails. When $M$ is the boundary of a smooth
bounded pseudoconvex
domain in $\mathbb{C}^{n}$, Boas and the first author proved Sobolev estimates for the
complex Green operator on a
large class of (weakly) pseudoconvex boundaries, namely those that admit a
defining function that is plurisubharmonic at points of $M$ (\cite{BoasStraube91a,
BoasStraube91}). In this paper, we prove an analogue of this
result for CR submanifolds of hypersurface type of codimension greater than one. In addition to nontrivial technical issues that must be addressed, an interesting twist
arises that is absent in the case of codimension one. This new difficulty is resolved via the use of results from CR geometry.

When proving Sobolev estimates, one has to control commutators of vector fields with
$\overline{\partial}_{M}$ and $\overline{\partial}_{M}^{*}$; the key is to find vector
fields so that the relevant components of the commutators are small. These components are
conveniently expressed by a 1-form $\alpha$ (\cite{D'Angelo93}, subsection 3.1.1,
\cite{BoasStraube93}, \cite{Straube10a}, section 5.9), and one is led to the condition
that $\alpha$ be
`exact on the null space of the Levi form' as a sufficient condition for Sobolev estimates
for the complex Green operator.
When $M$ is given by plurisubharmonic defining functions, this condition turns out to be satisfied.
It is in the proof of this fact that the ideas from
\cite{BoasStraube91a,
BoasStraube91} do not suffice and CR geometry enters into the argument.

Focusing on
$\alpha$ rather than on the vector fields directly has the additional advantage that we
obtain, essentially with no additional work, the analogue of a result for the
$\overline{\partial}$--Neumann operator from \cite{BoasStraube93}. A special
case of this analogue says that when $M$ is strictly pseudoconvex, except for a complex
submanifold with trivial first DeRham cohomology (smooth as a manifold with boundary),
then the complex Green operators on $M$ are continuous in Sobolev
spaces. More generally, $\alpha$ defines a DeRham cohomology class on complex
submanifolds of $M$, and estimates hold as soon as this class vanishes.

The remainder of the paper is organized as follows. In section \ref{results}, we recall the $\mathcal{L}^{2}$ theory of $\overline{\partial}_{M}$, we introduce the 1-form $\alpha$ mentioned above,
and we state our results. In section \ref{plush-proof}, we prove Theorem \ref{plush} to the effect that
$\alpha$ is exact on the null space of the Levi form when $M$ admits a set of plurisubharmonic defining functions. Section \ref{submanifold-proof} establishes the same conclusion when $M$ is strictly pseudoconvex except for the points on a special submanifold (this is Theorem \ref{submanifold}). The main result, Sobolev estimates for the complex Green operator when $\alpha$ is exact on the null space of the Levi form (Theorem \ref{main}), is proved in two steps. First, we show in section \ref{alpha-fields}
that exactness of $\alpha$ on the null space of the Levi form implies the existence of a family of vector fields with good commutation properties with $\overline{\partial}_{M}$ and $\overline{\partial}_{M}^{*}$
(Proposition \ref{commutators}). Then, in section \ref{fields&reg}, we prove that the existence of such a family of vector fields implies the Sobolev estimates in Theorem \ref{main} (Theorem \ref{fields-estimates}).

\section{Statement of results}\label{results}

We first recall the (standard) setup and notation by giving a
condensed and updated version of section 2 in \cite{Straube10} and section 1 in
\cite{MunasingheStraube12}. The
CR--dimension of $M$, denoted by $m-1$, is the dimension over $\mathbb{C}$ of
$T^{1,0}(M)$. Because $M$ is orientable, there is a global purely imaginary vector field
$T$ on $M$ of unit length that is orthogonal to the complex tangent space
$T^{\mathbb{C}}_{z}(M)$ at every point $z$ in $M$. The Levi form at $z \in M$ is the
Hermitian form $\lambda_{z}$ given by
\begin{equation}\label{levi}
[X,\overline{Y}] = \lambda_{z}(X,\overline{Y})T \;\;\text{mod} \;T^{1,0}(M) \oplus T^{0,1}(M)\,;
\;\;\;X, Y \in T^{1,0}(M).
\end{equation}
$M$ is pseudoconvex if the Levi form is either positive semidefinite at all points, or
negative semidefinite at all points. Replacing $T$ by its negative if necessary, we may
assume that the Levi form is positive semidefinite.

The $\overline{\partial}$--complex in $\mathbb{C}^{n}$ induces the
$\overline{\partial}_{M}$--complex on $M$ (see \cite{Boggess91}, chapter 8,
\cite{ChenShaw01}, chapter 7 for details). If
$L_{1}, \cdots, L_{m-1}$ are local orthonormal sections of
$T^{1,0}(M)$ (in the inner product induced from $\mathbb{C}^{n}$), and $\omega_{1},
\cdots, \omega_{m-1}$ generate the dual basis in $\Lambda^{1,0}(M)$, then a form $u$ can be
written (locally) as
$u = \sum^{\prime}_{|J|=q}u_{J}\overline{\omega}^{J}$, where $\overline{\omega}^{J} =
\overline{\omega_{j_{1}}}\wedge\overline{\omega_{j_{2}}} \cdots
\wedge\overline{\omega_{j_{q}}}$, and the prime indicates summation over strictly
increasing $q$--tuples only. (It will be convenient to still take the coefficients $u_{J}$
to be defined for all $J$ by skew symmetry.) In such a local
frame, $\overline{\partial}_{M}$ is expressed as
\begin{equation}\label{d-bar}
\overline{\partial}_{M}u =
\sideset{}{'}\sum_{|J|=q}\sum_{j=1}^{m-1}(\overline{L_{j}}u_{J})\overline{\omega_{j}}
\wedge\overline{\omega}^{J} +
\sideset{}{'}\sum_{|J|=q}u_{J}\;\overline{\partial}_{M}\overline{\omega}^{J} \;.
\end{equation}
Note that the coefficients of $u$ are not differentiated in the second sum.

The inner product on $(0,q)$--forms on $\mathbb{C}^{n}$ induces a pointwise inner product
on $(0,q)$--forms on $M$. This pointwise inner product provides an $\mathcal{L}^{2}$
inner product on $M$ via integration against (the induced) Lebesgue measure:
\begin{equation}\label{L-2inner}
(u,v) = \int_{M}(u,v)_{z}d\mu_{M}(z) \;.
\end{equation}
We denote by $\mathcal{L}^{2}_{(0,q)}(M)$, $0\leq q\leq (m-1)$, the completion of the
$C^{\infty}$ smooth forms with respect to the norm corresponding to \eqref{L-2inner}.
When expressed in local frames as above, these are precisely the forms all of whose
coefficients are (locally) square integrable. $\overline{\partial}_{M}$ extends to an
(unbounded) operator $\mathcal{L}^{2}_{(0,q)}(M) \rightarrow \mathcal{L}^{2}_{(0,q+1)}(M)$
with domain consisting of those forms where the result, when computed in the sense of
distributions in local frames, is actually in $\mathcal{L}^{2}$. As a densely defined
closed operator, $\overline{\partial}_{M}$ has a Hilbert
space adjoint $\overline{\partial}_{M}^{*}: \mathcal{L}^{2}_{(0,q+1)}(M) \rightarrow
\mathcal{L}^{2}_{(0,q)}(M)$. Integration by parts shows that in local coordinates,
$\overline{\partial}_{M}^{*}$ is given as follows (see for example \cite{ChenShaw01},
section 8.3, \cite{FollandKohn72}, p.94):
\begin{equation}\label{d-bar-*}
\overline{\partial}_{M}^{*}u = -
\sum_{j=1}^{m-1}\sideset{}{'}\sum_{|K|=q-1}L_{j}u_{jK}\overline{\omega}^{K} + \text{terms of
order zero} \;.
\end{equation}
Here, $u_{jK} = u_{j,k_{1}, \cdots, k_{q}}$, and `terms of order zero' means terms that
do not involve derivatives of the coefficients of $u$.

The operator $\overline{\partial}_{M}$ has closed range (at all levels, and hence so does
$\overline{\partial}_{M}^{*}$ at all levels); see \cite{Nicoara06} for $m\geq 3$ (also
\cite{HarringtonRaich10} for a version that weakens the condition on the Levi form to one
that depends on $q$) and \cite{Baracco1, Baracco2} for a proof that also covers $m=2$.
Denote by $\mathcal{H}_{q}(M)$ the subspace of $\mathcal{L}^{2}_{(0,q)}(M)$ consisting of
harmonic forms, that is, of forms in $\ker(\overline{\partial}_{M}) \cap
\ker(\overline{\partial}_{M}^{*})$, and denote by $H_{q}$ the orthogonal projection onto
it. The closed range property implies the estimate
\begin{multline}\label{basic}
\|u\|_{\mathcal{L}^{2}_{(0,q)}(M)}^{2} \lesssim
\|\overline{\partial}_{M}u\|_{\mathcal{L}^{2}_{(0,q+1)}(M)}^{2} +
\|\overline{\partial}_{M}^{*}u\|_{\mathcal{L}^{2}_{(0,q-1)}(M)}^{2} +
\|H_{q}u\|_{\mathcal{L}^{2}_{(0,q)}(M)}^{2} \;,\\
u \in \text{dom}(\overline{\partial}_{M}) \cap \text{dom}(\overline{\partial}_{M}^{*})\;,
\; 0 \leq q \leq m-1\;
\end{multline}
(\cite{Hormander65}, Theorem 1.1.2; \eqref{basic} is actually equivalent to
$\overline{\partial}_{M,q}$ and $(\overline{\partial}_{M,q-1})^{*}$ having closed range).

The Sobolev spaces of $(0,q)$-forms on $M$ are defined in the usual way. Fix a covering of $M$ by coordinate charts so that in each chart, $\omega_{1}, \cdots, \omega_{m-1}$ generate a pointwise orthonormal basis for $(0,1)$--forms. The Sobolev $s$-norm of a form is
computed componentwise in these local frames, via a partition of unity subordinate to the
cover given by the coordinate charts (see for example \cite{FollandKohn72}, p.122).
Derivatives of a form will similarly be taken componentwise in these frames. We use the notation $W^{s}_{(0,q)}(M)$ for the Sobolev space of order $s$ of $(0,q)$-forms.

When $1\leq q\leq (m-2)$, $\mathcal{H}_{q}(M)$ is finite dimensional (\cite{Nicoara06,
HarringtonRaich10}). More precisely, there is the estimate $\|v\|
\lesssim \|v\|_{-1}$ on $\mathcal{H}_{q}(M)$ (\cite{HarringtonRaich10}, proof of Lemma 5.1).
Applying this estimate to $H_{q}u$ gives
\begin{multline}\label{basic1a}
\|H_{q}u\|_{\mathcal{L}^{2}_{(0,q)}(M)} \lesssim \|H_{q}u\|_{W^{-1}_{(0,q)}(M)}\\
\leq \|u\|_{W^{-1}_{(0,q)}(M)}+\|H_{q}u-u\|_{W^{-1}_{(0,q)}(M)}
\lesssim \|u\|_{W^{-1}_{(0,q)}(M)} + \|H_{q}u-u\|_{\mathcal{L}^{2}_{(0,q)}(M)} \\
\lesssim \|u\|_{W^{-1}_{(0,q)}(M)} +\|\overline{\partial}_{M}(H_{q}u-u)\|_{\mathcal{L}^{2}_{(0,q+1)}(M)}  + \|\overline{\partial}_{M}^{*}(H_{q}u-u)\|_{\mathcal{L}^{2}_{(0,q-1)}(M)}  \\
= \|u\|_{W^{-1}_{(0,q)}(M)} +\|\overline{\partial}_{M}u\|_{\mathcal{L}^{2}_{(0,q+1)}(M)} + \|\overline{\partial}_{M}^{*}u\|_{\mathcal{L}^{2}_{(0,q-1)}(M)}  \; ;
\end{multline}
where the estimate in the third line follows from \eqref{basic} (note that $H_{q}(H_{q}u-u)=0$). Reinserting \eqref{basic1a} into \eqref{basic}
gives
\begin{multline}\label{basic1}
\|u\|_{\mathcal{L}^{2}_{(0,q)}(M)}^{2} \lesssim
\|\overline{\partial}_{M}u\|_{\mathcal{L}^{2}_{(0,q+1)}(M)}^{2} +
\|\overline{\partial}_{M}^{*}u\|_{\mathcal{L}^{2}_{(0,q-1)}(M)}^{2} +
\|u\|_{W^{-1}_{(0,q)}(M)}^{2} \;,\\
u \in \text{dom}(\overline{\partial}_{M}) \cap \text{dom}(\overline{\partial}_{M}^{*})\;,
\; 1 \leq q \leq m-2\; .
\end{multline}
The fact that the ranges of $\overline{\partial}_{M}$ and  $\overline{\partial}_{M}^{*}$
are closed (at all levels)
now imply that the complex Laplacian
$\Box = \overline{\partial}_{M}\overline{\partial}_{M}^{*} +
\overline{\partial}_{M}^{*}\overline{\partial}_{M}$, with domain so that the compositions
are defined, maps $(\mathcal{H}_{q})^{\perp} \cap \text{dom}(\Box)$ onto
$(\mathcal{H}_{q})^{\perp}$, and has a bounded inverse $G_{q}$ (on
$(\mathcal{H}_{q})^{\perp}$). Indeed, $G_{q}$ is given by  $\iota_{q}\circ\iota_{q}^{*}$,
where $\iota_{q}$ is the embedding $\iota_{q}: (\mathcal{H}_{q})^{\perp} \cap
\text{dom}(\overline{\partial}_{M}) \cap \text{dom}(\overline{\partial}_{M}^{*})
\rightarrow (\mathcal{H}_{q})^{\perp}$. \eqref{basic} says that
$\iota_{q}$ is continuous. Hence so is its adjoint. Thus $\iota_{q}\circ\iota_{q}^{*}$
is continuous. The argument that it inverts $\Box_{q}$ is the same as in the case of the
$\overline{\partial}$--Neumann operator, see \cite{Straube10a}, proof of
Theorem 2.9, part (1), in particular (2.76)--(2.78) there. It is customary to extend
$G_{q}$ to all of $\mathcal{L}^{2}_{(0,q)}(M)$ by setting it equal to zero on the
kernel of $\Box_{q}$ (which equals $\ker(\overline{\partial}_{M}) \cap
\ker(\overline{\partial}_{M}^{*}$)). $G_{q}$ is `the' complex Green operator.

In addition to the $H_{q}$, we consider two more Szeg\"{o} type projections (note
that $H_{0}$ is the usual Szeg\"{o} projection onto the square integrable CR functions).
Denote by $S_{q}^{\prime}$ the orthogonal projection
$\mathcal{L}^{2}_{(0,q)}(M) \rightarrow \;Im(\overline{\partial}_{M,q-1})$, where
$\overline{\partial}_{M,q-1}:\mathcal{L}^{2}_{(0,q-1)}(M) \rightarrow
\mathcal{L}^{2}_{(0,q)}(M)$ (and the range is interpreted as $\{0\}$ when $q=0$).
$S_{q}^{\prime\prime}$ denotes the orthogonal projection $\mathcal{L}^{2}_{(0,q)}(M)
\rightarrow Im((\overline{\partial}_{M,q})^{*})$ (when $q=(m-1)$, this range is
interpreted as $\{0\}$). Note that the three projections provide an orthogonal
decomposition of $\mathcal{L}^{2}_{(0,q)}(M)$, and $u = S_{q}^{\prime}u +
S_{q}^{\prime\prime}u + H_{q}u$, $0\leq q \leq (m-1)$.

The 1-form $\alpha$ mentioned in the introduction is defined as follows. Let $T$ be the
vector field introduced at the beginning of this section. Denote by
$\eta$ the purely
imaginary $1$--form on $M$ dual to $T$ (i.e. $\eta(T) \equiv 1$, and $\eta$ vanishes on
$T^{1,0}(M) \oplus T^{0,1}(M)$). Then $\alpha$ is the negative of the Lie derivative of $\eta$ in the direction of $T$:
\begin{equation}\label{alpha}
\alpha := -\{Lie\}_{T}(\eta)\;.
\end{equation}
Note that $\alpha$ is real. $\alpha$ is important because it arises when expressing $T$--components of certain commutators (\cite{D'Angelo93}, p.92, \cite{BoasStraube93}).
Indeed, when $X \in T^{1,0}(M)$, the definition of the Lie derivative (see for example \cite{GHV72}, section 4.3) gives
\begin{equation}\label{alphacomp}
\alpha\left(X\right) = -\left(T\eta(X) - \eta([T,X])\right ) = \eta([T,X]) \;
\end{equation}
(since $\eta(X)\equiv 0$ on $M$ and $T$ is tangent to $M$). Thus $\alpha(X)$ is the $T$--component of the commutator $[T, X]$.

The form $\alpha$ was introduced into the literature by D'Angelo (\cite{D'Angelo80, D'Angelo86, D'Angelo93}), also for the purpose of dealing with commutators as in \eqref{alphacomp}. Its role in the context of estimates for the $\overline{\partial}$--Neumann operator was discovered in \cite{BoasStraube93}. A detailed discussion of this role may be found in \cite{Straube10a}, sections 5.9--5.12.

Denote by $K \subset M$ the set of weakly
pseudoconvex points of $M$, and by $\mathcal{N}_{z}$ the null space of the Levi form at
the point $z \in K$. We say that \emph{$\alpha$ is exact on the null space
of the Levi form} if there exists a smooth function $h$,
defined in a neighborhood of $K$ (in $M$), such that
\begin{equation}\label{exact-null}
dh(L_{z})(z) = \alpha(L_{z})(z) \;, \; L_{z} \in
\mathcal{N}_{z},\;z \in K \, .
\end{equation}
We are now ready to state our results.

\begin{theorem}\label{main}
Let $M$ be a smooth compact pseudoconvex orientable CR submanifold of $\mathbb{C}^{n}$ of
hypersurface type, of CR--dimension (m-1). Assume that $\alpha = \alpha_{M}$ is
exact on the null space of the Levi form. Then for every nonnegative real number $s$,
there is a constant $C_{s}$ such that for all $u \in \mathcal{L}^{2}_{(0,q)}(M)$, $0\leq
q\leq (m-1)$,
\begin{equation}\label{main1}
\|S_{q}^{\prime}u\|_{s} + \|S_{q}^{\prime\prime}u\|_{s} +
\|H_{q}u\|_{s} \leq C_{s}\|u\|_{s}\;;\hskip 2.3in
\end{equation}
\begin{equation}\label{main2}
\|u\|_{s} \leq C_{s}\left(\|\overline{\partial}_{M}u\|_{s} +
\|\overline{\partial}_{M}^{*}u\|_{s} + \|u\|\right)\;;\; 1\leq q\leq (m-2)\;;\hskip 0.98in
\end{equation}
\begin{equation}\label{main3}
\|u\|_{s} \leq C_{s}\left(\|\overline{\partial}_{M}u\|_{s} +
\|\overline{\partial}_{M}^{*}u\|_{s}\right)\;;\; u\perp \mathcal{H}_{q}\;; \hskip2.26in
\end{equation}
\begin{equation}\label{main4}
\|G_{q}u\|_{s} \leq C_{s}\|u\|_{s}\;. \hskip 3.58in
\end{equation}
\end{theorem}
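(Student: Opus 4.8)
The plan is to establish Theorem~\ref{main} in the two stages announced in the introduction, adapting to CR submanifolds of hypersurface type the vector field method developed for the $\overline{\partial}$--Neumann operator in \cite{BoasStraube91a, BoasStraube93} (see also \cite{Straube10a}, sections~5.9--5.12). The guiding principle is \eqref{alphacomp}: $\alpha(X)$ is the $T$--component of $[T,X]$ for $X\in T^{1,0}(M)$, and these $T$--components are precisely the part of the commutators of $\overline{\partial}_M$ and $\overline{\partial}_M^{*}$ with a transversal differentiation that cannot be absorbed into $\overline{\partial}_M$, $\overline{\partial}_M^{*}$, or the basic estimate; the exactness hypothesis \eqref{exact-null} is what allows this obstruction to be driven to zero near the weakly pseudoconvex set $K$.

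Stage one (Proposition~\ref{commutators}, proved in section~\ref{alpha-fields}). Starting from a function $h$ satisfying \eqref{exact-null}, I would construct, for every $\varepsilon>0$, a real vector field $X_\varepsilon$ on $M$ that, on a fixed neighborhood of $K$, equals a positive multiple $a_\varepsilon T$ of $T$ modulo complex tangential terms, with $a_\varepsilon$ and the corrections bounded independently of $\varepsilon$, and such that the ``$\alpha$--parts'' of $[X_\varepsilon,\overline{\partial}_M]$ and $[X_\varepsilon,\overline{\partial}_M^{*}]$ --- the pieces governed by $X_\varepsilon(\log a_\varepsilon)-\alpha$ paired with $T^{1,0}(M)$ --- have pointwise norm at most $\varepsilon$ there. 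The device is to build $a_\varepsilon$ out of $h$ so that $X_\varepsilon(\log a_\varepsilon)-\alpha$ vanishes on the Levi null space $\mathcal{N}_z$ at points of $K$, hence, by continuity and compactness, has norm $\leq\varepsilon$ wherever the Levi eigenvalues lie below a threshold $\delta_\varepsilon$, together with a cutoff interpolating to a harmless choice away from $K$; where the Levi eigenvalues are $\gtrsim\delta_\varepsilon$ the subelliptic estimates of \cite{Kohn65} localize the analysis and dominate any bounded commutator. The extra bracket terms in \eqref{d-bar} and the zero--order terms in \eqref{d-bar-*}, which are genuinely present in codimension greater than one, involve no $T$--derivatives and enter only as bounded perturbations.

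Stage two (Theorem~\ref{fields-estimates}, proved in section~\ref{fields&reg}). Given such a family $\{X_\varepsilon\}$, the a priori estimate \eqref{main2}, together with its analogues at $q=0$ and $q=m-1$, follows by the standard induction on $s$: in a coordinate cover one estimates the top--order tangential derivative of $u$ by applying the basic estimate \eqref{basic} (or \eqref{basic1} when $1\leq q\leq m-2$) to $X_\varepsilon^{s}u$ --- more precisely, to a tangential operator of order $s$ built from $X_\varepsilon$ --- commutes $X_\varepsilon^{s}$ past $\overline{\partial}_M$ and $\overline{\partial}_M^{*}$, absorbs the ``good'' commutator terms by Cauchy--Schwarz with a small constant, absorbs the ``$\alpha$--parts'' using that they are $\leq\varepsilon$ (with $\varepsilon$ fixed small once and for all), and relegates the lower--order remainder to the inductive hypothesis, the strictly pseudoconvex charts being handled by the subelliptic estimate. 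To legitimize the computations one first runs them for the elliptically regularized forms $\Qd$, whose minimizers are smooth, keeping all constants uniform in $\delta$, and then lets $\delta\to 0$. From the a priori estimate one then obtains \eqref{main3} by replacing $\|u\|$ via \eqref{basic} on $\mathcal{H}_q^{\perp}$, the exact $W^{s}$ regularity of $G_q$ in \eqref{main4}, and \eqref{main1} from the identities $S_q'=\overline{\partial}_M\overline{\partial}_M^{*}G_q$, $S_q''=\overline{\partial}_M^{*}\overline{\partial}_M G_q$, $H_q=I-S_q'-S_q''$, by the same functional--analytic relations among $G_q$, $\overline{\partial}_M^{*}G_q$, $\overline{\partial}_M G_q$ and the projections as for the $\overline{\partial}$--Neumann operator (\cite{Straube10a}); in particular the case $q=0$ yields Sobolev continuity of the Szeg\"{o} projection, for which \eqref{basic1} is unavailable and the vector field argument is used directly.

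I expect the genuine obstacle to be stage one: matching the essentially topological condition \eqref{exact-null}, which constrains $\alpha$ only along the degenerate directions of the Levi form over $K$, with honest vector fields on all of $M$ whose commutators with $\overline{\partial}_M$ and $\overline{\partial}_M^{*}$ are controlled everywhere, in particular across the transition region where the Levi form passes from degenerate to definite. Stage two is largely bookkeeping, the delicate points being the uniformity of every estimate in the regularization parameter $\delta$ and the disposal of the codimension--specific zero--order terms in \eqref{d-bar}--\eqref{d-bar-*}.
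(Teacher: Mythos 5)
Your two-stage architecture (exactness of $\alpha$ yields good vector fields $X_\varepsilon$, which yield Sobolev estimates) matches the paper's, but there is a genuine gap in stage one: you have not supplied the mechanism that makes $[X_\varepsilon,\overline{Z}]_T$ small on all of $M$, in particular where the Levi form has small but nonzero eigenvalues. The exactness hypothesis \eqref{exact-null} controls $\alpha - dh$ only along the Levi null directions over $K$; you propose to split $M$ into a neighborhood of $K$ (handled by exactness) and a strictly pseudoconvex complement (handled by the subelliptic estimates of \cite{Kohn65}), glued by a cutoff. But the subelliptic constants degenerate as the Levi eigenvalues tend to zero, so neither mechanism controls the transition zone -- a difficulty you flag as ``the genuine obstacle'' without resolving it. Proposition~\ref{commutators} in the paper avoids any such splitting. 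Locally near a point $P$, one writes $X_\varepsilon = e^{h}T + \sum_j\bigl(b_{\varepsilon,j}L_j + \overline{b_{\varepsilon,j}L_j}\bigr)$ with $\{L_j\}$ diagonalizing the Levi form at $P$; then, by \eqref{T-component}, $[X_\varepsilon,\overline{L_k}]_T(P) = -e^{h(P)}\bigl(\overline{L_k}h - \alpha(\overline{L_k})\bigr)(P) + b_{\varepsilon,k}(P)[L_k,\overline{L_k}]_T(P)$. For $L_k\in\mathcal{N}_P$ the second summand vanishes and exactness kills the first; for $L_k\notin\mathcal{N}_P$ the Levi entry $[L_k,\overline{L_k}]_T(P)\neq 0$ and one chooses the constant $b_{\varepsilon,k}$ by \eqref{b-epsilon} to cancel the whole expression. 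Smallness near $P$ follows by continuity, and a partition of unity glues. This $b_{\varepsilon,k}$-cancellation -- the observation from \cite{BoasStraube91a} that commutators with strictly pseudoconvex directions are automatically benign -- is what your proposal omits: your ``complex tangential terms'' are present in form but assigned no job.

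In stage two your a priori computation (apply \eqref{basic1} to $X_\varepsilon^s u$, commute, absorb $\varepsilon$-small errors, induct) does match the paper's. But you pass from a priori to genuine estimates via Kohn--Nirenberg elliptic regularization ($\Qd$), whereas the paper approximates $M$ from inside the one-sided complexification $\widehat{M}$ by the strictly pseudoconvex CR manifolds $M_\delta=\{\rho+\delta e^{A|z|^2}=0\}$, establishes genuine estimates there from classical subelliptic theory (\cite{FollandKohn72}, \cite{Kohn85}), checks uniformity of constants in $\delta$, transfers forms, and passes to weak limits. This sidesteps the nontrivial verification, in the CR setting, that the commutator estimates survive the added $\delta$-term uniformly. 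Also, you propose to derive \eqref{main1} from \eqref{main4} via $S_q' = \overline{\partial}_M\overline{\partial}_M^*G_q$; the paper goes the other way, proving \eqref{main1} and \eqref{main3} directly by the vector field argument and then getting \eqref{main4} from $G_q = Q_qR_{q-1}S_q' + R_qQ_{q+1}S_q''$ (equation \eqref{Green}), in which every factor is already known to be $W^s$-continuous. Your direction would require either the equivalences from \cite{HPR13} or a separate argument for exact regularity of $\overline{\partial}_M G_q$ and $\overline{\partial}_M^*G_q$, which Sobolev continuity of $G_q$ alone does not give.
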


\smallskip
Estimates \eqref{main1} and \eqref{main4} say, respectively, that the three projections $S_{q}^{\prime}$, $S_{q}^{\prime\prime}$, and
$H_{q}$, and the complex Green operators $G_{q}$ are continuous in Sobolev norms. So are
the canonical solution operators to $\overline{\partial}_{M}$ and to
$\overline{\partial}_{M}^{*}$, by \eqref{main3}. In \eqref{main2}, we can replace the
$\|u\|$ term on the right hand side by $\|H_{q}u\|$, in view of \eqref{basic}. As a result, \eqref{main3} follows trivially from \eqref{main2} and \eqref{basic} when $1\leq q\leq (m-2)$.
Thus the main cases of interest in \eqref{main3} are the cases $q=0$ and $q=(m-1)$.

Estimate \eqref{main2} immediately gives that harmonic forms are in $W^{s}_{(0,q)}(M)$ for all
$s\geq 0$, hence are smooth, when $1\leq q\leq (m-2)$:
\begin{corollary}\label{harmsmooth}
Let $M$ (and $\alpha_{M}$) satisfy the assumptions of Theorem \ref{main}. Then
\begin{equation}\label{harmsmooth2}
\mathcal{H}_{q}(M) \subset C^{\infty}_{(0,q)}(M)\;,\;1\leq q \leq (m-2)\;.
\end{equation}
\end{corollary}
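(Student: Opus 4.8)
The plan is to deduce Corollary~\ref{harmsmooth} directly from estimate \eqref{main2} of Theorem~\ref{main}; no new work beyond Theorem~\ref{main} will be required.

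First I would fix $q$ with $1\le q\le (m-2)$ and take an arbitrary $u\in\mathcal{H}_{q}(M)$. By definition of $\mathcal{H}_{q}(M)$, such a $u$ lies in $\mathrm{dom}(\overline{\partial}_{M})\cap\mathrm{dom}(\overline{\partial}_{M}^{*})$ and satisfies $\overline{\partial}_{M}u=0$ and $\overline{\partial}_{M}^{*}u=0$; in particular $\overline{\partial}_{M}u$ and $\overline{\partial}_{M}^{*}u$ are the zero form, hence lie in $W^{s}$ for every $s\ge 0$, while $\|u\|<\infty$ since $u\in\mathcal{L}^{2}_{(0,q)}(M)$.

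Next I would apply \eqref{main2} to this $u$ for each nonnegative real $s$. The right-hand side then collapses to $C_{s}\bigl(\|\overline{\partial}_{M}u\|_{s}+\|\overline{\partial}_{M}^{*}u\|_{s}+\|u\|\bigr)=C_{s}\|u\|<\infty$, so that $\|u\|_{s}<\infty$. Thus $u\in W^{s}_{(0,q)}(M)$ for every $s\ge 0$, i.e.\ $u\in\bigcap_{s\ge 0}W^{s}_{(0,q)}(M)$. Since $M$ is a compact smooth manifold, the Sobolev embedding theorem identifies this intersection with $C^{\infty}_{(0,q)}(M)$, which is \eqref{harmsmooth2}.

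There is no genuine obstacle in this deduction: all the difficulty resides in proving \eqref{main2} itself (equivalently, in Proposition~\ref{commutators} and Theorem~\ref{fields-estimates}). The only point worth recording is that the hypotheses of Theorem~\ref{main} — in particular that $\alpha=\alpha_{M}$ is exact on the null space of the Levi form — are precisely the standing assumptions of the corollary, so Theorem~\ref{main} applies verbatim, and that \eqref{main2} is asserted for all $u\in\mathcal{L}^{2}_{(0,q)}(M)$ (with the usual convention that finiteness of the right-hand side forces finiteness of the left), so that substituting a harmonic form is legitimate.
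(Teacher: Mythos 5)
Your proposal is correct and follows exactly the paper's reasoning: the paper states that \eqref{main2} ``immediately gives that harmonic forms are in $W^{s}_{(0,q)}(M)$ for all $s\geq 0$, hence are smooth.'' Your only addition is to spell out the Sobolev-embedding step and the applicability of \eqref{main2}, which is harmless but not a different route.
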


We point out that there are manifolds $M$ as in Corollary \ref{harmsmooth} with $\mathcal{H}_{q}(M)\neq \{0\}$ (but finite dimensional) for say $q=1$. This can be seen as follows.

In \cite{Brink02}, the author noticed that there exist smooth compact strictly pseudoconvex submanifolds of hypersurface type and of any dimension, embedded into some $\mathbb{C}^{n}$, whose (smooth) $\overline{\partial}_{M}$--cohomology at the level of $(0,1)$--forms is nontrivial. Examples arise in \cite{CatLem92}, where the authors construct smooth strictly pseudoconvex compact embeddable CR manifolds
which admit small deformations that are also embeddable, but whose embeddings cannot be chosen close to the original embedding. A theorem of Tanaka (\cite{Tan75}) shows that for such a manifold $M$ (with CR--dimension at least two) the $\overline{\partial}_{M}$--cohomology at the level of $(0,1)$--forms cannot be trivial (if it were, the small deformations would have to be embeddable by embeddings close to the original one). These observations are contained in the Remark at the end of \cite{Brink02}.

The manifolds from the previous paragraph are also orientable, as they arise as boundaries of complex manifolds (\cite{CatLem92}, page 103). Moreover, they trivially satisfy the assumptions of Theorem \ref{main}: any $h\in C^{\infty}(M)$ will do in \eqref{exact-null}, as $\mathcal{N}_{z}=\{0\}$ for all $z\in M$. Therefore, if $\beta$ is a smooth $\overline{\partial}_{M}$--closed $(0,1)$--form such that $\overline{\partial}_{M}f=\beta$ admits no smooth solution $f$, this equation also does not admit a solution in $\mathcal{L}^{2}(M)$. If it did, the canonical solution would have to be smooth, in view of \eqref{main3} in Theorem \ref{main}. Thus for such $M$, $\mathcal{H}_{1}(M)\neq \{0\}$. 

\medskip
We give two classes of CR submanifolds that satisfy the assumptions of Theorem
\ref{main}. For the first, observe that because $M$ is orientable, there is a (tubular)
neighborhood $V$ of $M$ so that within $V$, $M$ is given globally by defining functions
$\rho_{1}, \cdots, \rho_{l}$: $M = \{z \in V|\rho_{j}=0,\, j=1,\cdots,l\}$. here $l$ is
the real codimension of $M$, that is, $l=2n-(2m-1)$. Theorem \ref{plush} is the analogue
of the main result in \cite{BoasStraube91} obtained for the case where $M$ is
the (smooth) boundary of a bounded pseudoconvex domain in $\mathbb{C}^{n}$.\footnote{In
\cite{BoasStraube91}, the authors
only needed to assume that the defining function is plurisubharmonic at points of the
boundary. Our proof does use plurisubharmonicity in some
(arbitrarily small) neighborhood of $M$. This may be an artifact of the proof. On
the other hand, in terms of actually verifying the assumption, not much is lost. The role
of $\alpha_{M}$ is not made explicit in \cite{BoasStraube91}.}

\begin{theorem}\label{plush}
Let $M$ be a smooth compact pseudoconvex orientable CR submanifold of $\mathbb{C}^{n}$ of
hypersurface type. Assume that $M$ admits a set of
plurisubharmonic defining functions in some neighborhood. Then $\alpha_{M}$ is
exact on the null space of the Levi form. Consequently, the conclusions of
Theorem \ref{main} and Corollary \ref{harmsmooth} hold.
\end{theorem}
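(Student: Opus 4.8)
The plan is to write down the function $h$ of \eqref{exact-null} explicitly in terms of the defining functions $\rho_1,\dots,\rho_l$ and the field $T$. First some bookkeeping. Let $T^{1,0}$ denote the $(1,0)$--part of $T$ and put $r_j:=(\partial\rho_j)(T)=T^{1,0}\rho_j$. Since $\rho_j$ vanishes on $M$, the $(1,0)$--form $\partial\rho_j|_M$ annihilates $T^{1,0}(M)$ (because $L\rho_j\equiv 0$ for $L$ tangent to $M$) and, being of type $(1,0)$, annihilates $T^{0,1}(M)$; hence $\partial\rho_j|_M$ is a multiple of $\eta$, and pairing with $T$ shows $\partial\rho_j|_M=r_j\,\eta$. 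Because $T\rho_j\equiv 0$ on $M$, the functions $r_j$ are real on $M$. They have no common zero on $M$: otherwise, at such a point every $\partial\rho_j$ would annihilate $T^{1,0}$, forcing $T^{1,0}\in T^{1,0}(M)$ and hence $T\in T^{\mathbb{C}}(M)$, impossible since $T\perp T^{\mathbb{C}}(M)$ and $T\neq 0$. Therefore $R:=\sum_{k=1}^{l}r_k^{2}$ is a strictly positive smooth function on $M$, the functions $\beta_j:=r_j/R$ are smooth and real, $\sum_j\beta_j r_j\equiv 1$, and $\eta=\sum_j\beta_j\,\partial\rho_j|_M$.

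Next, compute $\alpha$ on the null space of the Levi form. By \eqref{alphacomp}, $\alpha(L)=\eta([T,L])=-d\eta(T,L)$. Differentiating $\eta=\sum_j\beta_j\,\partial\rho_j|_M$ on $M$, using $d\bigl(\partial\rho_j|_M\bigr)=-\bigl(\partial\dbar\rho_j\bigr)|_M$, and evaluating the result on $(T,L)$ with $L\in T^{1,0}_z(M)$ (so that $L\rho_j=0$ and $(\partial\rho_j)(T)=r_j$), one obtains
\[
\alpha(L)=\sum_j r_j\,(L\beta_j)+\sum_j\beta_j\,(\partial\dbar\rho_j)(T,L),
\]
where $(\partial\dbar\rho_j)(T,L)$ equals, up to sign, the value of the complex Hessian of $\rho_j$ on $\bigl(L,\overline{T^{1,0}}\bigr)$. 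The key assertion is that \emph{the second sum vanishes whenever $L\in\mathcal{N}_z$, $z\in K$}. It suffices to show $(\partial\dbar\rho_j)(L,\overline L)=0$ for every $j$; since $\rho_j$ is plurisubharmonic, the Hermitian form $(X,Y)\mapsto(\partial\dbar\rho_j)(X,\overline Y)$ is semidefinite, so the Cauchy--Schwarz inequality for it then gives $(\partial\dbar\rho_j)(L,\overline W)=0$ for all $W$, in particular $W=T^{1,0}$. Now $\lambda_z(L,\overline L)=\sum_j\beta_j(z)\,(\partial\dbar\rho_j)(L,\overline L)$, which vanishes because $L\in\mathcal{N}_z$. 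If the $\beta_j$ could be chosen nonnegative we would be done at once—each summand is then $\geq 0$—and this is what happens in codimension one, the case of \cite{BoasStraube91}. In general the $\beta_j$ change sign, and it is precisely here that the arguments of \cite{BoasStraube91a,BoasStraube91} do not suffice: one invokes a result from CR geometry to the effect that through a weakly pseudoconvex point of $M$ the null directions of the Levi form are tangent to a complex submanifold of $\mathbb{C}^{n}$ lying in $M$, along which each $\rho_j$—a plurisubharmonic function vanishing identically—has vanishing complex Hessian, whence $(\partial\dbar\rho_j)(L,\overline L)=0$. I expect this step to be the main obstacle.

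Granting the vanishing of the second sum, for $L\in\mathcal{N}_z$ we have $\alpha(L)=\sum_j r_j\,(L\beta_j)$. Since $\beta_j=r_j/R$ and the $r_j$ are real, $L\beta_j=\frac{Lr_j}{R}-\frac{r_j\,LR}{R^{2}}$ and $\sum_j r_j\,Lr_j=\tfrac12\,LR$, so
\[
\alpha(L)=\sum_j r_j\,(L\beta_j)=\frac{\tfrac12\,LR}{R}-\frac{\bigl(\sum_j r_j^{2}\bigr)\,LR}{R^{2}}=-\frac{LR}{2R}=L\!\left(-\tfrac12\log R\right).
\]
Hence $h:=-\tfrac12\log\!\Bigl(\sum_{k=1}^{l}\bigl(T^{1,0}\rho_k\bigr)^{2}\Bigr)$, a smooth function on all of $M$, satisfies $dh(L_z)(z)=\alpha(L_z)(z)$ for all $L_z\in\mathcal{N}_z$, $z\in K$; this is \eqref{exact-null}. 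The conclusions of Theorem \ref{main} and Corollary \ref{harmsmooth} then follow. (When $l=1$ one has $\bigl(T^{1,0}\rho\bigr)^{2}=|\partial\rho|^{2}$ and $h=-\log|\partial\rho|$, recovering the hypersurface result.)
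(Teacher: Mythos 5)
Your reduction to showing $(\partial\dbar\rho_j)(L,\overline{L})=0$ for each $j$ is exactly right, and the closing computation (leading to the explicit $h=-\tfrac12\log R$) is a nice and arguably cleaner piece of bookkeeping than the paper's. But the one ``CR geometry'' result you invoke to carry the key step is false: a weakly pseudoconvex point of $M$ need \emph{not} lie on any complex submanifold of $M$. This already fails for boundaries of pseudoconvex domains (codimension one), e.g.\ the boundary of $\{|z_1|^4+|z_2|^2<1\}$ is weakly pseudoconvex along $\{z_1=0,\ |z_2|=1\}$ yet contains no germ of a complex curve there. So this route to the vanishing does not work.

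What you flagged as the obstacle is in fact exactly what the paper's two geometric lemmas are designed to overcome, and they resolve it by showing that the $\beta_j$ \emph{are} nonnegative — precisely the possibility you dismissed. By Baracco's one-sided complexification, $M$ bounds a complex ``strip'' $\widehat{M}$, and Lemma \ref{CR-geometry} shows that near $M$ this strip is contained in the hull of $M$ with respect to the plurisubharmonic functions of a neighborhood. Since each $\rho_j$ vanishes on $M$ and is plurisubharmonic, it follows that $\rho_j\ge 0$ on $\widehat M$ near $M$, hence $\tilde T\rho_j\ge 0$ (where $\tilde T=-iJT$ is the outward normal to $M$ in $\widehat M$). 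In your notation $r_j=\tfrac12\,\tilde T\rho_j\ge 0$, so $R>0$ and $\beta_j=r_j/R\ge 0$. With nonnegative $\beta_j$ and nonnegative Hessians $(\partial\dbar\rho_j)(L,\overline L)\ge 0$, the identity $\lambda_z(L,\overline L)=\sum_j\beta_j(z)\,(\partial\dbar\rho_j)(L,\overline L)=0$ for $L\in\mathcal{N}_z$ forces each term to vanish, and your argument then closes. The paper packages this differently — it works with the single function $\rho=\sum_j\rho_j$, writes $\eta=e^{h}(\partial\rho-\dbar\rho)$, and uses plurisubharmonicity of $\rho$ plus Cauchy--Schwarz — but the geometric input (Lemma \ref{CR-geometry} and Lemma \ref{nonzero}) is the same, and it is exactly the ingredient missing from your proposal.
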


The assumption in Theorem \ref{plush} relies heavily on $M$ being embedded in
$\mathbb{C}^{n}$. Our next result has a more intrinsic flavor. It is analogous to
results for the $\overline{\partial}$--Neumann operators in \cite{BoasStraube93}.

\begin{theorem}\label{submanifold}
Let $M$ be a smooth compact pseudoconvex orientable CR submanifold of $\mathbb{C}^{n}$ of
hypersurface type, strictly pseudoconvex except for the points of a closed smooth submanifold $S$ (with or without boundary). Suppose that at each
point of $S$, the (real) tangent space is contained in the null space of the Levi form
(of $M$) at the point. If the first DeRham cohomology $H^{1}(S)$ is trivial, then $\alpha_{M}$
is exact on the null space of the Levi form. Consequently, the conclusions of
Theorem \ref{main} and Corollary \ref{harmsmooth} hold.
\end{theorem}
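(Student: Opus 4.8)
The plan is to produce the function $h$ in \eqref{exact-null} directly by solving the corresponding exactness problem along $S$, using the hypothesis $H^1(S)=0$. Here is the structure I would follow. First, I would recall that on the strictly pseudoconvex part $M \setminus S$ the condition \eqref{exact-null} is vacuous, since $\mathcal{N}_z = \{0\}$ there; so the only constraint is that $dh(L_z)(z) = \alpha(L_z)(z)$ for $z \in S$ and $L_z \in \mathcal{N}_z$. The key geometric input from the hypothesis is that at each point $p \in S$ the full real tangent space $T_p(S)$ sits inside $\mathcal{N}_p$. I would first check what this forces: since $\mathcal{N}_p$ is the null space of a Hermitian form, it is a complex subspace of $T^{1,0}_p(M) \oplus T^{0,1}_p(M)$ (well, $\mathcal{N}_z$ is a subspace of $T^{1,0}_z(M)$ and $T_p(S) \subset \mathcal{N}_p \oplus \overline{\mathcal{N}_p}$ together with $T$-directions — one has to sort out exactly how $T_p(S)$, which contains no $T$-direction if $S$ is a complex submanifold, embeds). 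In fact the clean way to read the hypothesis is that $S$ is a CR submanifold of $M$ all of whose complex tangent directions are Levi-null, and moreover all of its real tangent directions are contained in $\mathcal{N}_p \oplus \overline{\mathcal{N}_p}$; in particular $S$ is a complex submanifold of $\mathbb{C}^n$ (this is the analogue of the situation in \cite{BoasStraube93}). Establishing this normal form is the first real step.

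Next, I would restrict the real $1$-form $\alpha$ to $S$, obtaining a smooth real $1$-form $\alpha|_S \in \Omega^1(S)$. The heart of the argument is to show that $\alpha|_S$ is closed, i.e. $d(\alpha|_S) = 0$ on $S$. Since $\alpha = -\mathrm{Lie}_T(\eta)$ and $\mathrm{Lie}_T = d\circ\iota_T + \iota_T\circ d$, we have $d\alpha = -d\,\mathrm{Lie}_T(\eta) = -\mathrm{Lie}_T(d\eta)$; so it suffices to understand $d\eta$ and $\mathrm{Lie}_T(d\eta)$ restricted to $S$. By definition $d\eta(X,\overline Y)$ is (up to a constant and terms coming from the $T^{1,0}\oplus T^{0,1}$ ambiguity in \eqref{levi}) essentially the Levi form $\lambda(X,\overline Y)$ on complex tangent vectors, which vanishes identically when both $X,Y$ are Levi-null — hence $d\eta$ pulls back to zero on the complex tangential part of $S$, and since $S$ carries no $T$-direction, $d\eta|_S = 0$. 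One then has to differentiate this: $\mathrm{Lie}_T(d\eta)|_S$ need not vanish just because $d\eta|_S = 0$, because $T$ is transverse to $S$. Here I would use that $d\eta$ vanishes to appropriate order along the whole Levi-null locus together with the tangency hypothesis, exactly as in the codimension-one argument of \cite{BoasStraube93}; the computation shows the tangential part of $\mathrm{Lie}_T(d\eta)$ along $S$ is controlled by $d\eta$ and its first variation transverse to $S$, both of which are arranged to vanish on $T(S)$ by the hypothesis $T_p(S) \subset \mathcal{N}_p$. The upshot is $d(\alpha|_S) = 0$.

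With $\alpha|_S$ a closed $1$-form on $S$ and $H^1(S) = 0$ (here it matters that de Rham cohomology of a compact manifold with boundary is computed with ordinary, not compactly supported, forms — so closed $1$-forms are exact), there is a smooth $\tilde h \in C^\infty(S)$ with $d\tilde h = \alpha|_S$ on $S$. Finally I would extend $\tilde h$ to a smooth function $h$ on a neighborhood of $S$ in $M$ (e.g. via a tubular-neighborhood retraction onto $S$, or Whitney/Seeley extension if $S$ has boundary), and enlarge the neighborhood to a neighborhood of all of $K = S$ (noting $K = S$ here since $M$ is strictly pseudoconvex off $S$). For $z \in S$ and $L_z \in \mathcal{N}_z$: writing $L_z$ in terms of its action — since $\mathcal{N}_z$ is spanned by vectors whose real and imaginary parts are tangent to $S$ (that is the content of $T_z(S) \subset \mathcal{N}_z \oplus \overline{\mathcal{N}_z}$ in the complex-submanifold picture) — $dh(L_z) = d\tilde h(L_z) = \alpha|_S(L_z) = \alpha(L_z)$, which is \eqref{exact-null}. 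Then Theorem \ref{main} and Corollary \ref{harmsmooth} apply. The main obstacle is the middle step: proving $d(\alpha|_S)=0$, which requires carefully extracting, from the single hypothesis "$T_p(S)\subset\mathcal{N}_p$", enough vanishing of $d\eta$ and its Lie derivative along $S$ — this is where the CR-geometric content and the real work of adapting \cite{BoasStraube93} to higher codimension lies, and where one must be careful about the $T^{1,0}\oplus T^{0,1}$ correction terms in the Levi form that are invisible in the hypersurface case.
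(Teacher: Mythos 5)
Your overall architecture matches the paper's: show $\alpha$ restricted to $S$ is closed, solve $d\tilde h = \alpha|_S$ on $S$ using $H^1(S)=0$, then extend $\tilde h$ to a neighborhood of $S$ in $M$. The paper simply cites the closedness of $\alpha$ on the null space of the Levi form as an already-established Lemma (the Lemma on p.230 of \cite{BoasStraube93}, also Lemma 5.14 in \cite{Straube10a}), whereas you sketch a derivation via $d\alpha = -\mathrm{Lie}_T(d\eta)$. Your sketch points in the right direction, but since this fact is in the literature, the paper does not reprove it; you would do well to cite it rather than flag it as ``the real work.''

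There is, however, a genuine gap in your extension step. You propose $h = \tilde h \circ \pi$ via a tubular-neighborhood retraction, and then assert that $dh(L_z) = \alpha(L_z)$ for all $L_z \in \mathcal{N}_z$ because ``$\mathcal{N}_z$ is spanned by vectors whose real and imaginary parts are tangent to $S$.'' This reverses the hypothesis. The hypothesis gives $T_z(S) \subset \mathcal{N}_z$, i.e.\ tangent vectors to $S$ are Levi-null; it does \emph{not} say the Levi-null space is exhausted by $S$-tangent directions. If $\mathcal{N}_z \supsetneq T_z(S)$, a retraction kills all normal derivatives of $h$ along $S$, so $dh(L_z) = 0$ for $L_z \in \mathcal{N}_z$ transverse to $S$, while $\alpha(L_z)$ need not vanish — and \eqref{exact-null} fails. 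The paper avoids this by constructing local extensions $h_j$ whose \emph{full} differential agrees with $\alpha$ at points of $S$ (prescribing the normal first-order jet of $h_j$ along $S$ to be the normal components of $\alpha$, which is compatible with $d\tilde h = \alpha$ on $T(S)$), and then patching with a partition of unity $\{\varphi_j\}$; the cross terms $\sum(d\varphi_j)h_j$ vanish on $S$ because every $h_j$ restricts to $\tilde h$ there and $\sum d\varphi_j = 0$. Replace the retraction by such a local jet-matching extension and the argument goes through.
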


For emphasis, we formulate the following important special case as a corollary.

\begin{corollary}\label{complex-sub}
Let $M$ be a smooth compact pseudoconvex orientable CR submanifold of $\mathbb{C}^{n}$ of
hypersurface type, strictly pseudoconvex except for a simply connected complex
submanifold (smooth as a submanifold with boundary). Then $\alpha_{M}$ is
exact on the null space of the Levi form. Consequently, the conclusions of
Theorem \ref{main} and Corollary \ref{harmsmooth} hold.
\end{corollary}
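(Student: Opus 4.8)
The plan is to deduce Corollary \ref{complex-sub} directly from Theorem \ref{submanifold}, by checking that a simply connected complex submanifold $S$ of $M$ (smooth as a submanifold with boundary) satisfies all the hypotheses of that theorem. There are two things to verify: first, that at each point of $S$ the real tangent space $T_z(S)$ lies in the null space $\mathcal{N}_z$ of the Levi form of $M$; and second, that $H^1(S) = 0$. The second is immediate: a simply connected manifold has trivial first DeRham cohomology (even with boundary, since $H^1_{dR}(S) \cong \mathrm{Hom}(\pi_1(S), \mathbb{R})$, or more elementarily because any closed $1$-form on a simply connected manifold is exact via integration along paths, the value being path-independent by simple connectedness). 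So the real content is the first point.

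To see that $T_z(S) \subset \mathcal{N}_z$ when $S$ is a complex submanifold of $M$, I would argue as follows. Since $S$ is a complex submanifold of $\mathbb{C}^n$ contained in $M$, at each $z \in S$ the complex tangent space $T^{\mathbb{C}}_z(S) = T^{1,0}_z(S) \oplus T^{0,1}_z(S)$ is contained in $T^{\mathbb{C}}_z(M) = T^{1,0}_z(M) \oplus T^{0,1}_z(M)$, and moreover $T_z(S) = T^{\mathbb{C}}_z(S)$ as real spaces because $S$ is complex (its real tangent space is $J$-invariant and coincides with its complex tangent space). Thus it suffices to show $T^{1,0}_z(S) \subset \mathcal{N}_z$, i.e. that any local section $X$ of $T^{1,0}(S)$ lies in the null space of the Levi form of $M$ at each point of $S$. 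Take $X, Y$ local sections of $T^{1,0}(S)$; since $S$ is a complex manifold, $[X,\overline{Y}]$ is again a section of $T^{1,0}(S) \oplus T^{0,1}(S) \subset T^{1,0}(M) \oplus T^{0,1}(M)$, so by the defining relation \eqref{levi} for the Levi form we get $\lambda_z(X,\overline{Y}) = 0$ for all such $Y$. Taking $Y = X$ shows $\lambda_z(X,\overline{X}) = 0$; by pseudoconvexity (positive semidefiniteness) and the Cauchy–Schwarz inequality for the Hermitian form $\lambda_z$, this forces $\lambda_z(X, \overline{W}) = 0$ for \emph{every} $W \in T^{1,0}_z(M)$, so $X(z) \in \mathcal{N}_z$. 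Since $T_z(S)$ is spanned over $\mathbb{R}$ by the real and imaginary parts of vectors in $T^{1,0}_z(S)$, and $\mathcal{N}_z$ is a complex subspace of $T^{1,0}_z(M)$ closed under these operations in the appropriate sense, we conclude $T_z(S) \subset \mathcal{N}_z$.

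With both hypotheses of Theorem \ref{submanifold} verified, that theorem applies and yields that $\alpha_M$ is exact on the null space of the Levi form. The conclusions of Theorem \ref{main} and Corollary \ref{harmsmooth} then follow immediately from the corresponding statements already proved. I expect the only mild subtlety to be the careful handling of the manifold-with-boundary case in both the cohomology computation and the Levi-form argument — but since the hypotheses ask for $S$ smooth as a submanifold with boundary, the tangent-space and Cauchy–Schwarz arguments go through verbatim at boundary points, and $H^1(S) = 0$ still follows from simple connectedness. No genuinely hard step arises here; the corollary is essentially an unwinding of definitions feeding into Theorem \ref{submanifold}.
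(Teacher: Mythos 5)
Your proposal is correct and follows the approach the paper intends: Corollary~\ref{complex-sub} is stated as a special case of Theorem~\ref{submanifold}, and you verify both hypotheses — that simple connectedness gives $H^1(S)=0$, and that the real tangent space of a complex submanifold $S\subset M$ lies in the null space of the Levi form (via $[X,\overline{Y}]$ tangent to $S$ hence complex-tangential, and then Cauchy--Schwarz for the semidefinite Levi form). The paper gives no separate proof of the corollary, so there is nothing to compare beyond this unwinding, which you have done correctly.
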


The conclusions in Theorem \ref{submanifold} and in Corollary
\ref{complex-sub} suggest that $\alpha$ restricted to the submanifold $S$ be closed. That
this is indeed the case was discovered in \cite{BoasStraube93}. This fact is crucial for the proof of
Theorem \ref{submanifold}. In particular, $\alpha$ represents a DeRham cohomology class $[\alpha]$ on $S$, and the assumptions in Theorem \ref{submanifold} and Corollary \ref{complex-sub} imply that $[\alpha] = 0$. The appearance of this cohomology class explains why an annulus may or may not be an obstruction to Sobolev estimates (wormlike vs. non-wormlike Hartogs domains \cite{BoasStraube92}), and why a disc is always benign (\cite{BoasStraube92, BoasStraube93}).

\bigskip
\emph{Remark 1}: The estimates in Theorem \ref{main} are not independent. For example, in
\cite{HPR13} the authors show, among
many other things, that regularity of $G_{q}$ is equivalent to regularity of three
Szeg\"{o} type projections at levels $(q-1)$, $q$, and $(q+1)$ (for the
$\overline{\partial}$--Neumann operators and the Bergman
projections, this was shown in \cite{BoasStraube90}). It was observed already in \cite{BoasStraube91} that \eqref{main4} is an easy consequence of \eqref{main3} and \eqref{main1}; see also \eqref{Green} below.


\medskip
\emph{Remark 2}: The method used to prove Theorem \ref{submanifold} can be adapted to
also obtain results when $M$ contains a Levi flat patch that is foliated by complex
manifolds of dimension $(m-1)$ (the so called Levi foliation of the patch). This
is of some intrinsic interest; the solvability of $dh = \alpha$ on the whole patch (i.e.
\eqref{exact-null} above) turns out to be equivalent to a question that is much studied in
foliation theory, namely whether the Levi foliation can be given \emph{globally} by a
\emph{closed} $1$--form. We refer the reader to the discussion in section 5.11 in
\cite{Straube10a} and the references given there.


\medskip
\emph{Remark 3}: It is not known whether the converse to Corollary \ref{complex-sub}
holds. That is, if say $G_{1}$ satisfies Sobolev estimates, does it follow that the
restriction of $\alpha$ to $S$ is exact (i.e. $[\alpha]=0$)? This
question is also open in the context of the $\overline{\partial}$--Neumann problem. For a
partial result, see \cite{Barrett94}.

\section{Proof of Theorem \ref{plush}}\label{plush-proof}

Denote by $J$ the usual complex structure map induced by the complex structure of
$\mathbb{C}^{n}$. Suppose $M$ is defined by the defining functions $\rho_{1}, \cdots,
\rho_{l}$ which are plurisubharmonic in some neighborhood of $M$. Then $l = 2n -2m + 1$,
and the real gradients are linearly independent over $\mathbb{R}$ at points of $M$.
Because $T^{\mathbb{C}}(M)$ is $J$-invariant, and because $J$ preserves inner products,
$JT$ is orthogonal to $M$.

By Theorem 2.2 in \cite{Baracco1}, $M$ has a one-sided complexification to a complex
submanifold $\widehat{M}$ of $\mathbb{C}^{n}$ (a `strip'), so that $M$ is the smooth
connected component of the boundary of $\widehat{M}$ from the pseudoconvex side. Denote
by $\tilde{T}$ the real unit normal to $M$ pointing `outside' $\widehat{M}$, that is,
$\tilde{T} = -iJT$. The following geometric Lemma contains the crux of the matter.

\begin{lemma}\label{CR-geometry}
Let $U$ be a neighborhood of $M$. Then,
near $M$, $\widehat{M}$ is contained in the hull of $M$ with respect to the functions that are plurisubharmonic in $U$.
\end{lemma}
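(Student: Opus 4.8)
The plan is to produce, for each point $p \in \widehat{M}$ close to $M$, a function plurisubharmonic on $U$ that attains its maximum over $M$ at a value no smaller than its value at $p$; by definition this places $p$ in the plurisubharmonic hull of $M$. The natural candidates are the defining functions $\rho_1,\dots,\rho_l$ themselves, since they are plurisubharmonic in a neighborhood of $M$ by hypothesis (shrinking $U$ if necessary, we may assume each $\rho_j$ is plurisubharmonic on $U$), and they vanish identically on $M$. So it suffices to show that, near $M$, every point $p$ of $\widehat{M}$ satisfies $\rho_j(p) \le 0$ for all $j$ — i.e. that $\widehat{M}$ lies (locally) in the intersection $\bigcap_j \{\rho_j \le 0\}$ — together with a separating function at points where all $\rho_j(p)=0$ but $p \notin M$.

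The key geometric input is the description of $\widehat{M}$ from Theorem 2.2 of \cite{Baracco1}: $\widehat{M}$ is a one-sided complexification (a strip), a complex submanifold whose boundary component is $M$, approached from the pseudoconvex side, and the outward real normal to $M$ along $\widehat{M}$ is $\tilde T = -iJT$. First I would set up local coordinates: near a point $q \in M$, parametrize a neighborhood of $q$ in $\widehat{M}$ by a holomorphic disc / complex chart, so that $M$ corresponds to the boundary part and the interior of $\widehat{M}$ is reached by moving in the $-\tilde T = iJT$ direction. Now fix $j$ and consider the restriction of $\rho_j$ to $\widehat{M}$. Since $\widehat{M}$ is a complex submanifold and $\rho_j$ is plurisubharmonic on $U$, the restriction $\rho_j|_{\widehat{M}}$ is subharmonic on $\widehat{M} \cap U$. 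It vanishes on the boundary piece $M$. The crucial sign computation is the normal derivative: I must check that the derivative of $\rho_j$ in the inward direction $iJT = -\tilde T$ is $\le 0$ along $M$. This is where pseudoconvexity of $M$ relative to $\widehat{M}$ enters. Because $d\rho_j$ annihilates $T^{\mathbb C}(M) = T^{1,0}(M)\oplus T^{0,1}(M)$ and $T$ (as $\rho_j \equiv 0$ on $M$), the differential $d\rho_j$ at a point of $M$ is a multiple of the defining form of $M$ inside the ambient; combined with the fact that $M$ bounds $\widehat M$ from the pseudoconvex side, the inward normal derivative of $\rho_j$ cannot be positive — otherwise $\rho_j$ restricted to a normal holomorphic disc in $\widehat{M}$ would be a subharmonic function, zero on the boundary arc corresponding to $M$ and strictly positive just inside, contradicting the maximum principle. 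Hence $\rho_j \le 0$ on $\widehat M$ near $M$, for every $j$.

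The remaining point is to upgrade ``$\rho_j(p)\le 0$ for all $j$'' to genuine membership in the hull, because the plurisubharmonic hull of $M$ must be a subset of $\bigcap_j\{\rho_j = 0\}$ only if we exhibit, at points $p\in\widehat M$ near $M$ with some $\rho_j(p)<0$, an admissible function with $\sup_M = 0 \le \rho_j(p)$ — which $\rho_j$ itself is not, but $-\rho_j$ would be if it were plurisubharmonic, which in general it is not. Instead I would argue that $\bigcap_j\{\rho_j \le 0\}$ near $M$ is exactly the pseudoconvex side of $M$ (locally), so a point $p$ with some $\rho_j(p)<0$ lies strictly inside the pseudoconvex side; by the local geometry of $\widehat M$ as a strip touching $M$ only along $M$, such interior points of $\bigcap_j\{\rho_j\le 0\}$ do not lie on $\widehat M$ at all once we are close enough to $M$ — so in fact $\widehat M$ near $M$ is contained in $\bigcap_j\{\rho_j = 0\}$, and every such point is trivially in the hull (each $\rho_j$ attains its max $0$ over $M$ and equals $0$ at $p$). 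I expect the main obstacle to be exactly this last dimension-counting / geometric step: verifying that the complexification $\widehat M$ cannot penetrate into the region where some $\rho_j$ is strictly negative, i.e. controlling how $\widehat M$ sits relative to the individual level sets $\{\rho_j = 0\}$ rather than just their common zero set. This should follow from the construction in \cite{Baracco1} (the strip is tangent to $M$ to infinite order in the transverse directions, being a complexification), but making that precise — rather than the subharmonicity argument, which is routine — is the delicate part.
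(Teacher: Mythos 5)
Your approach has several serious gaps, and it misses the central mechanism of the paper's argument.

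First, controlling the $\rho_j$ alone cannot establish membership in the plurisubharmonic hull. The hull consists of all $p$ with $u(p)\le\sup_M u$ for \emph{every} $u$ plurisubharmonic on $U$, not just for the finitely many defining functions. You acknowledge this gap but never close it; even if you could show $\rho_j\le 0$ on $\widehat M$ near $M$, that would yield only a necessary condition for hull membership.

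Second, the maximum-principle argument you give for $\rho_j\le 0$ is unsound. Subharmonicity of $\rho_j|_{\widehat M}$ bounds $\rho_j$ in terms of its values on the \emph{entire} boundary of whatever piece of $\widehat M$ one works on, and $M$ is only one part of that boundary; you have no control of $\rho_j$ on the ``interior edge'' of the strip. The specific contradiction you try to extract — that a subharmonic function on a ``normal'' disc cannot vanish on the boundary arc contained in $M$ and be positive just inside — is no contradiction at all: such a function can perfectly well be positive inside, with its maximum attained on the rest of the disc boundary, which is not in $M$. A contradiction would require a holomorphic disc in $\widehat M$ with its \emph{whole} boundary in $M$, but a disc transverse to $M$ does not have that property. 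Note also that the normal-derivative inequality you want ($\tilde T\rho_j\ge 0$ along $M$) is precisely the content of Lemma \ref{nonzero}, which the paper \emph{deduces from} Lemma \ref{CR-geometry}; using it as an input makes the argument circular.

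Third, the concluding geometric claim — that $\widehat M$ near $M$ lies in $\bigcap_j\{\rho_j=0\}$ — is false on dimensional grounds. The gradients of the $\rho_j$ are linearly independent, so $\bigcap_j\{\rho_j=0\}$ is exactly $M$ near $M$, a real $(2m-1)$-manifold, whereas $\widehat M$ has real dimension $2m$. Generically the $\rho_j$ are strictly negative in the interior of $\widehat M$.

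The proof in the paper rests on a construction your proposal never touches: analytic discs. It shows that, near $M$, $\widehat M$ is swept out by families of analytic discs whose boundaries lie in $M$ (or in subsets already placed in the hull). For any plurisubharmonic $u$ on $U$, the sub-mean-value property on such a disc gives $u(p)\le\max_{\partial\text{disc}}u\le\sup_M u$ for every $p$ in the disc, and hull membership follows at once. The real work — using strictly pseudoconvex points as a starting base, the fact that $M$ is a single CR orbit, minimality (via \cite{BER99}) away from $(m-1)$-dimensional complex submanifolds of $M$, and Tumanov's propagation theorem near such submanifolds — is devoted to showing this disc-filling propagates along CR curves to all of $M$. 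That global propagation step is the heart of the lemma, and it is entirely absent from your proposal.
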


\begin{proof}
The proof is more or less implicit in the proof of Theorem 2.2 in \cite{Baracco1}. One
has to observe that the extensions there can be swept out by analytic discs with
boundaries in $M$, or in a set already under control. This does necessitate a
modification, as \cite{Baracco1} at one point uses propagation of extendibility from
\cite{HangesTreves83}, which is not based on analytic discs. 

First note that there are strictly pseudoconvex points on $M$. This can be seen by enclosing $M$ inside a
large sphere and then shrinking the radius. Points of first contact with $M$ are strictly pseudoconvex points of $M$. Near a strictly pseudoconvex point $z_{0}$ of $M$, $\widehat{M}$ is constructed as follows.
Graph $M$, near $z_{0}$, over its projection $\pi$ into a suitable copy of
$\mathbb{C}^{m}$. Then the hypersurface $\pi(M)$ is strictly pseudoconvex at $\pi(z_{0})$, and the
inverse of the projection extends to the pseudoconvex side of $\pi(M)$ (as all
components
are CR functions on $\pi(M)$), by the Kneser--Lewy extension theorem\footnote{Often referred to as the Lewy extension theorem, but see \cite{Kneser36, Range02}.}.
Moreover, near $\pi(z_{0})$, the pseudoconvex side of $\pi(M)$ can be
filled in by analytic discs
with boundaries in $\pi(M)$, and these lift to analytic discs with boundaries in $M$ that sweep out the extension. In particular, by the maximum principle for plurisubharmonic functions, near $z_{0}$ the extension constructed in this manner is contained (after shrinking if necessary) in the hull of $M$ with respect to the plurisubharmonic functions in $U$.

Next, note that $M$ consists of a single CR orbit (\cite{Baracco1}, Proposition 2.1). Pick a strictly pseudoconvex point $z_{0}\in M$. Let $z_{1}$ be an arbitrary point of $M$, and let $\gamma(t)$, $0\leq t\leq 1$ be a (piecewise smooth) CR curve with $\gamma(0)=z_{0}$ and $\gamma(1)=z_{1}$. Let $A$ be the set of those $t\in [0,1]$ with the property that near $\gamma(t)$, $\widehat{M}$ is contained in he hull of $M$ with respect to the plurisubharmonic functions in $U$. By what was said above , $0\in A$, i.e. $A$ is not empty. By definition, $A$ is open. It now suffices to show that $A$ is also closed; then $A=[0,1]$, $1\in A$, and  $\widehat{M}$ is contained in the required hull near $z_{1}$.

First assume that $t_{0}\in \overline{A}$ is such that  $\gamma(t_{0})\in M$ is not contained in any (germ of a) complex submanifold of $M$ of dimension $(m-1)$. Near $\gamma(t_{0})$, we can again graph $\widehat{M}$ over its projection $\pi$ into a suitable copy of $\mathbb{C}^{m}$. Then $\pi(M)$ is a hypersurface, and there is no germ of a complex submanifold of $\pi(M)$ of dimension $(m-1)$ that contains $\pi(\gamma(t_{0}))$. It follows that $\pi(M)$ is minimal at $\pi(\gamma(t_{0}))$, in the terminology of \cite{BER99}. Also, there are points arbitrarily close to $\pi(\gamma(t_{0}))$ where the Levi form has at least one positive eigenvalue (otherwise a neighborhood of $\pi(\gamma(t_{0}))$ would be foliated by complex submanifolds of $\pi(M)$ of dimension $(m-1)$). In particular, it is clear which side of $\pi(M)$ is the pseudoconvex side. Because $\pi(M)$ is minimal at $\pi(\gamma(t_{0}))$, there is a one--sided neighborhood of $\pi(\gamma(t_{0}))$ on the pseudoconvex side of $\pi(M)$ that is swept out 
by analytic discs with boundaries in $\pi(M)$. Moreover, these discs can be chosen `small': for any neighborhood of $\pi(\gamma(t_{0}))$, the construction can be done within that neighborhood.
This follows from \cite{BER99}, Theorem 8.6.2 and the proof of Theorem 8.6.1. there. It is not explicitly stated in \cite{BER99} that the swept out one--sided neighborhood is on the pseudoconvex side when the hypersurface is pseudoconvex, but this property follows for example from the standard characterization of pseudoconvexity via families of analytic discs\footnote{Via the continuity principle (see e.g.\cite{Range86}, Theorem 5.8 in section 5.4). Indeed, if there were a (small) disc with boundary in $\pi(M)$ and non-empty intersection with the pseudoconcave side of $\pi(M)$, translating it along the normal to $\pi(M)$ at $\pi(\gamma(t_{0}))$ would produce a one parameter family of discs that contradicts the continuity principle on the pseudoconvex side of $\pi(M)$.}. Lifting these discs via the graphing function(s) for $\widehat{M}$ gives a family of analytic discs with boundaries in $M$ that sweeps out a neighborhood of $z_{0}$ in $\widehat{M} \cup M$. This neighborhood is contained in the hull of $M$ 
with respect to the functions that are plurisubharmonic in $U$. (This part of the argument does not use the fact that $t_{0}\in \overline{A}$.)

Assume now that $t_{0} \in \overline{A}$ and $\gamma(t_{0})\in S \subset M$, where $S$ is a (germ of) an $(m-1)$ dimensional complex submanifold of $M$. In this case, we use results from \cite{Tumanov95} (see also \cite{Tumanov94}). With $\pi$ a projection as above, we have $\pi(\gamma(t_{0})) \in \pi(S) \subset \pi(M)$, and $\pi(S)$ is an $(m-1)$ dimensional complex submanifold of the hypersurface $\pi(M)$. For $t$ close to $t_{0}$, $\pi(\gamma(t))$ gives a CR curve in $\pi(M)$. Near $\pi(\gamma(t_{0}))$, it must stay inside $\pi(S)$. Choose a point $w$ on it that is the projection of a point $\gamma(t_{1})$ with $t_{1} \in A$. Let $M^{\prime}$ be the projection of a (small) neighborhood of $\gamma(t_{1})$ in $\widehat{M}\cup M$. $M^{\prime}$ is a one--sided neighborhood of $w$, and its boundary coincides with $\pi(M)$ near $w$. The proof of Theorem 3.3 in \cite{Tumanov95} shows the following. There are a finite sequence of points $q_{j}$, $1\leq j\leq k$ on $\pi(M)$ (close to the CR curve $\pi(\gamma)$) 
and one--sided neighborhoods $M_{j}$ of $q_{j}$, $1\leq j\leq k$, with the following properties: (i) $q_{1}$ is close to $w$ and $M_{1} \subseteq M^{\prime}$, (ii) $q_{k}=\pi(\gamma(t_{0}))$,
and (iii) $M_{j+1}$ is swept out by analytic discs with boundaries in $\pi(M)\cup M_{j}$, $1\leq j\leq (k-1)$. Moreover, this can be done so that the $M_{j}$'s as well as all the discs involved stay in as small a neighborhood of $\pi(\gamma(t_{0})))$ as we wish. Again by the characterization of pseudoconvexity via families of analytic discs, all the discs involved stay on the same side of $\pi(M)$ (the pseudoconvex side\footnote{$\pi(M)$ may be Levi flat near $\pi(\gamma(t))$, so that both sides are pseudoconvex. However, the pseudoconvex side of $M$ is defined globally (it is given by $iJT$). The local projections $\pi$ near a point in $M$ then transfer this direction/side ``downstairs''.}). Lifting the neighborhoods $M_{j}$ and the discs involved via the graphing function(s) of $\widehat{M}$ (the inverse of $\pi$) and applying the maximum principle for plurisubharmonic functions shows that there is indeed a neighborhood of $\gamma(t_{0})$ in $M\cup\widehat{M}$ that is contained in the hull of $M$ with 
respect to the 
plurisubharmonic functions in $U$ (namely the image of $M_{k}$). In other words, $t_{0}\in A$.

Since $z_{1}$ was an arbitrary point of $M$, the proof of Lemma \ref{CR-geometry} is complete.
\end{proof}

Set $\rho := \rho_{1} + \cdots + \rho_{l}$. The restriction of $\rho$ to $\widehat{M}$ can serve as a one--sided defining function for $M$ (on $\widehat{M}$), in view of Lemma \ref{nonzero}:

\begin{lemma}\label{nonzero}
$\tilde{T}\rho > 0$ on $M$.
\end{lemma}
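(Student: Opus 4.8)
The plan is to get the non-strict inequality $\tilde{T}\rho\ge 0$ from plurisubharmonicity together with the hull statement of Lemma~\ref{CR-geometry}, and then to upgrade it to strict positivity using the linear independence of the gradients $\nabla\rho_{1},\dots,\nabla\rho_{l}$ at points of $M$ and a dimension count against the tangent space of $\widehat{M}$.

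First I would apply Lemma~\ref{CR-geometry} with $U$ a neighborhood of $M$ on which all of $\rho_{1},\dots,\rho_{l}$ are plurisubharmonic. Since $\widehat{M}$ is, near $M$, contained in the hull of $M$ with respect to the functions plurisubharmonic on $U$, and since each $\rho_{j}$ is plurisubharmonic on $U$ with $\rho_{j}|_{M}\equiv 0$, the maximum principle (the defining property of that hull) forces $\rho_{j}\le 0$ on $\widehat{M}$ near $M$ for every $j$. Now fix $p\in M$. As noted above $JT$ is orthogonal to $M$, and since $\widehat{M}$ is the one--sided complexification of $M$, the space $T_{p}\widehat{M}$ is the smallest complex subspace of $T_{p}\mathbb{C}^{n}$ containing $T_{p}M$, namely $T^{\mathbb{C}}_{p}(M)\oplus\mathbb{C}T$; in particular $\tilde{T}=-iJT$ is tangent to $\widehat{M}$ and, being the outward normal to $M$, the arc $\{p-t\tilde{T}:0<t<\varepsilon\}$ lies in $\widehat{M}$ near $M$ for $\varepsilon$ small. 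Consequently $t\mapsto\rho_{j}(p-t\tilde{T})$ vanishes at $t=0$ and is $\le 0$ for small $t>0$, so its one--sided derivative at $0$ is $\le 0$; that is, $\tilde{T}\rho_{j}(p)=\langle\nabla\rho_{j}(p),\tilde{T}\rangle\ge 0$ for each $j$, whence $\tilde{T}\rho(p)=\sum_{j=1}^{l}\tilde{T}\rho_{j}(p)\ge 0$.

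The substantive step is to exclude equality. Suppose $\tilde{T}\rho(p)=0$. Since the summands $\tilde{T}\rho_{j}(p)$ are all nonnegative, each vanishes, i.e. $\nabla\rho_{j}(p)\perp\tilde{T}$; and because $\rho_{j}|_{M}\equiv 0$ we also have $\nabla\rho_{j}(p)\perp T_{p}M$. Hence all $l$ vectors $\nabla\rho_{1}(p),\dots,\nabla\rho_{l}(p)$ lie in the orthogonal complement of $T_{p}M\oplus\mathbb{R}\tilde{T}=T_{p}\widehat{M}$ inside $T_{p}\mathbb{C}^{n}$, a real subspace of dimension $2n-2m=l-1$; this contradicts the linear independence of the $\nabla\rho_{j}(p)$. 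Therefore $\tilde{T}\rho(p)>0$, and since $p\in M$ was arbitrary the lemma follows. The only delicate point is precisely this last move: the plurisubharmonicity and the CR--geometric input of Lemma~\ref{CR-geometry} yield only the non-strict bound, and it is the combination ``$\tilde{T}$ tangent to $\widehat{M}$'' together with ``$M$ has $l$ independent normal directions but $\widehat{M}$ only has $l-1$'' that produces the strict sign.
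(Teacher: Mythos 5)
Your proof is correct and follows essentially the same route as the paper: $\tilde{T}\rho_{j}\geq 0$ from Lemma~\ref{CR-geometry} together with plurisubharmonicity and the hull property, and then strict positivity from the linear independence of $\nabla\rho_{1},\dots,\nabla\rho_{l}$ combined with the fact that $\tilde{T}$ is normal to $M$ (your dimension count and the paper's ``$\tilde{T}$ lies in the span of the gradients'' argument are two phrasings of the same observation). One small technical slip: the straight segment $\{p-t\tilde{T}:0<t<\varepsilon\}$ need not lie in $\widehat{M}$, since $\widehat{M}$ is a curved complex submanifold; what you want is a curve $\gamma$ in $\widehat{M}$ with $\gamma(0)=p$ and $\gamma'(0)=-\tilde{T}$ (available because $-\tilde{T}$ is an inward tangent vector to the manifold-with-boundary $\widehat{M}$ at $p$), and then the one-sided derivative of $\rho_{j}\circ\gamma$ at $0$ gives $\tilde{T}\rho_{j}(p)\geq 0$ exactly as you intend.
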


\begin{proof}
Let $z \in M$. Then there is at least one index $j_{0}$ such that
$\nabla_{\mathbb{R}}\rho_{j_{0}}(z)$ is not orthogonal to $\tilde{T}$. This is because the
(real) gradients of the defining functions span (over $\mathbb{R}$) the orthogonal
complement of $M$. In particular, $\tilde{T}\rho_{j_{0}}(z) \neq 0$. Also,
in view of Lemma \ref{CR-geometry}, $\rho_{j} \geq 0$ on $\widehat{M}$, $1 \leq j \leq l$.
Therefore, $\tilde{T}\rho_{j}(z) \geq 0$ for all $j$, $1 \leq j \leq l$. Consequently,
$\tilde{T}\rho_{j_{0}}(z) > 0$, and $\tilde{T}\rho(z) =
\tilde{T}(\rho_{1} + \cdots + \rho_{l})(z) > 0$.
The proof of Lemma \ref{nonzero} is complete.
\end{proof}

We are now ready to prove Theorem \ref{plush}; the essence of the argument/computation is
the same as in \cite{BoasStraube91a, BoasStraube93, Straube10}, but the organization is
somewhat different.

\smallskip
\begin{proof}[Proof of Theorem \ref{plush}]
Denote by $J^{*}$ the adjoint of $J$ with respect to the pairing between vector fields
and forms (\cite[page 42]{Boggess91}). Recall that $J^{*}\partial =
i\partial$ and $J^{*}\overline{\partial} = -i\overline{\partial}$. In particular,
\begin{equation}\label{comp1}
\left (\partial\rho - \overline{\partial}\rho\right )(T) = -\left (\partial\rho -
\overline{\partial}\rho\right )(J^{2}T) = -\left (J^{*}\partial\rho -
J^{*}\overline{\partial}\rho\right )(JT) = -id\rho(JT) > 0 \; ;
\end{equation}
by Lemma \ref{nonzero}. Therefore, $\eta = e^{h}\left (\partial\rho -
\overline{\partial}\rho\right )$ for some $h \in C^{\infty}(M)$ (since also $\left
(\partial\rho -
\overline{\partial}\rho\right )$ vanishes on $T^{1,0}(M) \oplus T^{0,1}(M)$). We claim
that
\begin{equation}\label{claim}
\alpha\left(\overline{L}\right )(z) = dh\left(\overline{L}\right)(z)\;,\;z \in M\;,\;L \in
\mathcal{N}_{z} \;.
\end{equation}

Using one of the Cartan formulas for the Lie derivative (see for example \cite{GHV72}, section 4.3, Proposition II, (1))
and the fact that $\eta(T)\equiv 1$
gives
\begin{equation}\label{alphacalc2}
\alpha\left (\overline{L}\right ) =
-\left((i(T)d + d\,i(T))\eta \right)(\overline{L}) = -d\eta(T, \overline{L}) \;,
\end{equation}
where $i(T)$ denotes the substitution operator (\cite{GHV72}, section 4.1).
The fact that $\eta =
e^{h}\left(\partial\rho-\overline{\partial}\rho\right)$ implies
\begin{equation}\label{alphacalc3}
d\eta = e^{h}\left[dh\wedge\left(\partial\rho-\overline{\partial}\rho\right)
+ d\left(\partial\rho-\overline{\partial}\rho\right) \right] = dh\wedge\eta -
2e^{h}\partial\overline{\partial}\rho~.
\end{equation}
We have used again that $e^{h}\left (\partial\rho -
\overline{\partial}\rho\right )=\eta$, and that $d = \partial + \overline{\partial}$. Now
insert \eqref{alphacalc3} into the last term of \eqref{alphacalc2} and observe that
$\eta\left(\overline{L}\right) =0$ implies that
$\left(dh\wedge\eta\right)\left(T, \overline{L}\right) =
-dh\left(\overline{L}\right)\eta(T) = -dh\left(\overline{L}\right)$.
The result is
\begin{equation}\label{alphacalc4}
\alpha\left(\overline{L}\right)(z) = dh\left( \overline{L}\right)(z) +
2e^{h}\partial\overline{\partial}\rho\left(T, \overline{L}\right)(z)\;, \; z \in
M\;,\;L \in \mathcal{N}_{z}\;.
\end{equation}
The plurisubharmonicity of $\rho$ says that $i\partial\overline{\partial}\rho$ is
positive semi--definite. Therefore, $L \in \mathcal{N}_{z}$ implies that the second term
on the right hand side of \eqref{alphacalc4} vanishes (note that
$\partial\overline{\partial}\rho\left(T, \overline{L}\right)=
i\partial\overline{\partial}\rho\left(-iT, \overline{L}\right) =
i\partial\overline{\partial}\rho\left((-iT)_{(1,0)}, \overline{L}\right)$, where the
subscript denotes the $(1,0)$--part of the vector $-iT$). Thus
$\alpha(\overline{L})(z)=dh(\overline{L})(z)$. Since $\alpha$ is real, we also have
$\alpha(L)(z)=dh(L)(z)$. We have shown that $\alpha$ agrees with $dh$, and so is
exact, on the null space of the Levi form. The proof of Theorem \ref{plush} is complete.
\end{proof}

\medskip
\emph{Remark 4}: It is worthwhile to note that the particular combination $\rho = \rho_{1} + \cdots +
\rho_{l}$ does not matter in Lemma \ref{nonzero} (and the rest of the argument) above; any
combination $c_{1}\rho_{1} + \cdots + c_{l}\rho_{l}$ with $c_{j}>0$, $1\leq j \leq l$,
will do. Geometrically, this says the following. For a point $z$ of $M$, consider the
positive cone $C_{z}$ in $\mathbb{C}^{n} \approx \mathbb{R}^{2n}$ generated by the
gradients of the $\rho_j$'s,
$C_{z} = \{c_{1}\nabla\rho_{1} + \cdots + c_{l}\nabla\rho_{l}\,| \, c_{j}>0\,,\,1\leq
j\leq l\}$. Denote by $\widehat{C_{z}}$ its dual cone. Then the extension
$\widehat{M}$ goes in the direction of $-\tilde{T}$, and so near $M$ is contained in the union $\cup_{z\in M}(z+\widehat{C_{z}})$ of dual cones.


\section{Proof of Theorem \ref{submanifold}}\label{submanifold-proof}

The proof is essentially from \cite{BoasStraube93} (see also \cite{Straube10a}, proof of Corollary 5.16), where a slightly weaker conclusion is obtained when $M$ is the boundary of a smooth bounded pseudoconvex domain.
\begin{proof}[Proof of Theorem 3]
 The crucial fact for the proof, discovered in \cite{BoasStraube93} (see the Lemma on p.230; see also \cite{Straube10a}, Lemma 5.14), is that $\alpha$ is closed on the null space of the Levi form, that is , $d\alpha|\mathcal{N}_{z} = 0$, for $z \in M$. Consequently, $\alpha$ represents a DeRham cohomology class on $S$. Because $H^{1}(S)$ is trivial, $\alpha = d\tilde{h}$ on $S$, for some $C^{\infty}$--function $\tilde{h}$ on $S$ (if $S$ is a submanifold with boundary, then $\tilde{h}$ is smooth up to the boundary). Locally, $\tilde{h}$ can be extended into a neighborhood (in $M$) of $S$ in such a way that \emph{for points on $S$}, the differential of the extended function, at the point, agrees with $\alpha$ at the point. This is also true near points of the boundary, if $S$ is a submanifold with boundary. By compactness, we can choose a finite cover $\{U_{j}\}_{j=1}^{J}$ of $S$, associated extensions $\{h_{j}\}_{j=1}^{J}$ of $\tilde{h}$, and a partition of unity $\{\varphi_{j}\}_{j=1}^{J}$ of a neighborhood
of $S$ that is subordinate to this cover. We now set $h:= \sum_{j=1}^{J}\varphi_{j}h_{j}$. The function $h$ is defined in a neighborhood of $S$, and for $z \in S$,
 \begin{multline}
  \;\;\;\;\;\;\;\;\;\;\;dh(z) = \sum_{j=1}^{J}\varphi_{j}(z)dh_{j}(z) + \sum_{j=1}^{J}d\varphi_{j}(z)h_{j}(z) \\
  = \sum_{j=1}^{J}\varphi_{j}(z)\alpha(z) + \sum_{j=1}^{J}d\varphi_{j}(z)\tilde{h}(z) = \alpha(z) \; .\;\;\;\;\;\;\;
 \end{multline}
We have used here that $\sum_{j=1}^{J}d\varphi_{j}(z) = d(\sum_{j=1}^{J}\varphi_{j})(z) = 0$. Using a cutoff function supported in a small enough neighborhood of $S$ and identically one in a (smaller) neighborhood, we can extend $h$ to a $C^{\infty}$--function on $M$. This concludes the proof of Theorem \ref{submanifold}.

 \end{proof}


\section{Exactness of $\alpha$ and good vector fields}\label{alpha-fields}

Recall $K\subset M$ denotes the set of weakly pseudoconvex points of $M$ and
$\mathcal{N}_z$
denotes the null space of the Levi form at point $z\in K$.  When proving Sobolev
estimates, one needs vector fields with good commutator properties. The required
properties come for free for commutators with vector fields in strictly pseudoconvex
directions. This crucial observation was made in \cite{BoasStraube91a} (in the context
of the $\overline{\partial}$--Neumann problem), see also \cite{Straube10}, section 5.7.
For commutators with fields in weakly pseudoconvex directions, the needed
commutator properties come from the exactness of $\alpha$ on the null space of the Levi
form.

When $X$ is a vector field on $M$, let us denote its $T$--component modulo $T^{1,0}(M)\oplus T^{0,1}(M)$ by $(X)_{T}$.

\begin{proposition}\label{commutators}
Let $M$ be a smooth compact pseudoconvex orientable CR submanifold of $\mathbb{C}^{n}$ of
hypersurface type. Assume that $\alpha_{M}$ is
exact on the null space of the Levi form. Then there exists a constant $C$ such that for every  $\varepsilon>0$, there are 
smooth real vector fields $X_{\varepsilon}$ on $M$ with the following properties:

(i)
\begin{equation}\label{uniformly transversal}
1/C \leq  \left|(X_{\varepsilon})_{T}\right|   \leq C \; ,
\end{equation}
and

(ii)
\begin{equation}\label{uniformlysmall}
\left|([X_{\varepsilon},\overline{Z}])_{T}\right| \leq \varepsilon
\end{equation}
for every unit vector field $Z$ in $T^{1,0}(M)$.
\end{proposition}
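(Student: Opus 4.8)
The plan is to construct the vector fields $X_\varepsilon$ by interpolating between two regimes: near the weakly pseudoconvex set $K$, where the exactness hypothesis on $\alpha$ is the essential input, and away from $K$, where strict pseudoconvexity makes the construction essentially automatic. Concretely, I would start from the transversal field $T$ itself, which satisfies (i) trivially (with $(T)_T = 1$), and try to replace it by $X_\varepsilon = e^{g_\varepsilon} T$ modulo terms in $T^{1,0}(M)\oplus T^{0,1}(M)$, for a suitable real function $g_\varepsilon$ and suitable complex-tangential correction terms. The point of the exponential factor $e^{g_\varepsilon}$ is that, by the computation \eqref{alphacomp} and the Leibniz rule for the Lie bracket, the $T$-component of $[e^{g} T, \overline{Z}]$ picks up $e^g\big(\alpha(\overline{Z}) - \overline{Z}(g)\big)$ modulo contributions that can be absorbed into the complex-tangential part or are controlled by the Levi form. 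So the heart of the matter is to choose $g_\varepsilon$ so that $\overline{Z}(g_\varepsilon) = \alpha(\overline{Z})$ to within $\varepsilon$ when $Z$ is (nearly) in the null space of the Levi form, while in strictly pseudoconvex directions the discrepancy, even if large, is harmless because it can be soaked up using the positivity of the Levi form.

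Here is how I would organize it. First, by hypothesis there is a smooth $h$ defined in a neighborhood $U_K$ of $K$ in $M$ with $dh(L_z) = \alpha(L_z)$ for $L_z \in \mathcal{N}_z$. I would fix a small number $\delta>0$ and split the unit sphere bundle of $T^{1,0}(M)$ over a neighborhood of $K$ into directions that are ``$\delta$-close to $\mathcal{N}_z$'' and directions where the Levi form is $\geq$ some positive multiple of $\delta$. On the latter set of directions, the relevant commutator term in a Kohn-type estimate is dominated by the Levi form, so no smallness of $([X,\overline Z])_T$ is needed there; on the former, continuity of $dh$ and $\alpha$ gives $|dh(\overline Z) - \alpha(\overline Z)| \lesssim \delta$ uniformly. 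However, the cleaner route — and the one I would actually take — is to define $g_\varepsilon := \chi_\varepsilon \cdot h$ where $\chi_\varepsilon$ is a cutoff that is $1$ on a shrinking neighborhood of $K$ and supported in $U_K$; away from the support of $\chi_\varepsilon$, i.e. where $M$ is strictly pseudoconvex (with a uniform lower bound on the Levi form on that compact complement), one does not need the commutator to be small at all, so one can simply take $X_\varepsilon = T$ (with correction terms vanishing) there and patch. The patching has to be done carefully so that (i) holds with a constant $C$ independent of $\varepsilon$, which it does since $(e^{g_\varepsilon}T)_T = e^{g_\varepsilon}$ and $g_\varepsilon = \chi_\varepsilon h$ is uniformly bounded (it is a fixed function times a cutoff bounded by $1$).

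The main obstacle — and the reason the statement is nontrivial — is that \eqref{exact-null} only controls $dh$ against vectors \emph{in} the null space $\mathcal{N}_z$, whereas \eqref{uniformlysmall} must hold for \emph{every} unit $Z \in T^{1,0}(M)$, including directions transverse to $\mathcal{N}_z$ and including points near but not on $K$. Bridging this gap is exactly the interpolation described above: for $Z$ within angle $O(\varepsilon)$ of $\mathcal{N}_z$ one uses continuity to get $|dh(\overline Z) - \alpha(\overline Z)| = O(\varepsilon)$ and adds a complex-tangential correction term of the form $\sum_j a_j^\varepsilon \overline{L_j}$, chosen so that its own $T$-component contribution (which arises because $[\overline{L_j},\overline Z]$ may have a $T$-component, namely the conjugated Levi form, which vanishes precisely on $\mathcal{N}$) cancels the leftover; for $Z$ bounded away from $\mathcal{N}_z$ the Levi form $\lambda_z(Z,\overline Z)$ is bounded below, and one builds the correction term using the positivity of $\lambda$ to kill the commutator's $T$-component exactly (this is the ``for free'' mechanism in strictly pseudoconvex directions noted in \cite{BoasStraube91a}). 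Assembling these local choices with a partition of unity on the sphere bundle, pushing the resulting correction vector field down to $M$, and checking that the constant in (i) stays $\varepsilon$-independent while the bound in (ii) can be made $\leq \varepsilon$ by first choosing $\delta$ small and then choosing the cutoff and correction coefficients accordingly, completes the argument. I expect the bookkeeping in combining the two regimes uniformly — making sure the complex-tangential corrections do not themselves spoil (i) or reintroduce a non-small $T$-component through their brackets with $\overline Z$ — to be the genuinely delicate part.
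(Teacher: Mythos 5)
Your ansatz $X_{\varepsilon} = e^{g_{\varepsilon}}T + (\text{complex-tangential corrections})$ and the intuition that strictly pseudoconvex directions are ``free'' while the exactness of $\alpha$ controls the null directions are both aligned with the paper. But the proposal has three genuine gaps, and in fact the places you flag as ``delicate'' are precisely where the argument as you outline it breaks down.

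First, the choice $g_{\varepsilon} = \chi_{\varepsilon}h$ with $\chi_{\varepsilon}$ a cutoff shrinking to $K$ is wrong. In the transition region $\overline{Z}g_{\varepsilon}$ contains the term $h\,\overline{Z}\chi_{\varepsilon}$, which blows up as the cutoff thins, and there is no Levi positivity there to absorb it (the transition region is close to $K$). The paper instead extends $h$ to a smooth function on all of $M$ and sets $g_{\varepsilon} = h$ globally, independent of $\varepsilon$. Where $\alpha \neq dh$ no harm is done: at such directions the Levi form is nonzero, and a tangential coefficient $b_{\varepsilon,k}$ can be chosen to cancel the discrepancy exactly (see \eqref{b-epsilon}). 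Second, a ``partition of unity on the sphere bundle'' does not produce a vector field; the same single $X_{\varepsilon}$ must satisfy \eqref{uniformlysmall} for \emph{every} unit $Z$, so you cannot choose corrections that depend on $Z$. The paper's device is to diagonalize the Levi form at a point $P$ in a local frame $L_{1},\dots,L_{m-1}$. Then $[L_{j},\overline{L_{k}}]_{T}(P)=0$ for $j\neq k$, so the contribution of the correction coefficient $b_{\varepsilon,k}$ appears only against the $k$-th Levi eigenvalue. One kills $[X_{\varepsilon,P},\overline{L_{k}}]_{T}(P)$ exactly for each $k$ — via $dh(\overline{L_k})=\alpha(\overline{L_k})$ when $L_{k}(P)\in\mathcal{N}_{P}$, and via $b_{\varepsilon,k}$ when not — whence it vanishes at $P$ for all $Z$ by linearity, and is $<\varepsilon$ on a neighborhood by continuity. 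No interpolation between angular regimes is needed at all. Third, and most importantly, you flag but do not resolve why the honest patching on $M$ with a partition of unity $\{\phi_{j}\}$ does not reintroduce a large $T$-component through $\sum_{j}(\overline{Z}\phi_{j})(X_{\varepsilon,P_{j}})_{T}$. The resolution is that with $g=h$ chosen globally, $(X_{\varepsilon,P_{j}})_{T}=e^{h}$ is the \emph{same} for every local piece, so this sum equals $e^{h}\overline{Z}(\sum_{j}\phi_{j})=0$ identically. Your cutoff choice $g_{\varepsilon}=\chi_{\varepsilon}h$ would destroy exactly this cancellation, since then $(X_{\varepsilon})_{T}$ varies with the patch. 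This cancellation is the crux of the proposition; without it the construction fails.
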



\begin{proof}[Proof of Proposition \ref{commutators}]
The proof consists in combining, and rewriting in the present context, the arguments from
the proofs of Theorem 5.9, Lemma 5.10, and Proposition 5.13 in \cite{Straube10a}, or from
\cite{StraubeSucheston02}, equivalence of (iii) and (iv) in the theorem there.

Fix $\varepsilon > 0$. Locally, near a point $P\in M$, the vector field
$X_{\varepsilon}$ we seek can be written as

\begin{equation}\label{X-epsilon}
X_{\varepsilon} = e^{g_{\varepsilon}}T + \sum_{j=1}^{m-1}\left(b_{\varepsilon,j}L_{j} +
\overline{b_{\varepsilon,j}L_{j}}\right) \; ,
\end{equation}
for smooth functions $g_{\varepsilon}$ and $b_{\varepsilon,j}$ that are to be determined.
Here, $L_{1}, \cdots, L_{m-1}$ generate a basis of $T^{\mathbb{C}}(M)$ near $P$. We have used
that $X_{\varepsilon}$ is real. We may choose $L_{1}, \cdots,
L_{m-1}$ so that \emph{at} $P$ (but not necessarily near $P$), this basis diagonalizes the
Levi form. Then
we have, at $P$,
\begin{multline}\label{T-component}
 \left[X_{\varepsilon}, \overline{L_{k}}\right]_{T}(P) =
-e^{g_{\varepsilon}(P)}\overline{L_{k}}g_{\varepsilon}(P) +
e^{g_{\varepsilon}(P)}\left[T,\overline{L_{k}}\right]_{T}(P) + \sum_{j=1}^{m-1}
b_{\varepsilon,j}(P)\left[L_{j}, \overline{L_{k}}\right]_{T}(P) \\
= -e^{g_{\varepsilon}(P)}\left(\overline{L_{k}}g_{\varepsilon}(P) -
\alpha(\overline{L_{k}})(P)\right) +
b_{\varepsilon,k}(P)\left[L_{k},\overline{L_{k}}\right]_{T}(P) \; .
\end{multline}
For the first equality, note that all the other terms in the commutators coming from the
sum in \eqref{X-epsilon} are multiples of one of the $L_{j}$, $\overline{L_{j}}$, or
$\left[\overline{L_{j}},\overline{L_{k}}\right]$, and so are complex tangential, hence
have vanishing $T$--component. In the second equality, we have used \eqref{alphacomp},
which says that $[T,\overline{L_{k}}]_{T} = \alpha(\overline{L_{k}})$, and also the fact
that $[L_{j},\overline{L_{k}}]_{T} = \delta_{j,k}$ (because $L_{1}, \cdots, L_{m-1}$
diagonalizes the Levi form at $P$).

We first consider the case where $L_{k}(P) \in \mathcal{N}_{P}$. Then
$[L_{k},\overline{L_{k}}](P) = 0$, so that the relevant $T$--component becomes
$-e^{g_{\varepsilon}(P)}\left(\overline{L_{k}}g_{\varepsilon}(P) -
\alpha(\overline{L_{K}})(P)\right)$. It is now clear how the exactness of $\alpha$ on
$\mathcal{N}_{P}$ enters: if we choose $g_{\varepsilon}=h$, where $h$ is the function
from the definition of exactness of $\alpha$ on the null space of the Levi form (which we
may assume to be defined on all of $M$, rather than just in a neighborhood of $K$), then
$\overline{L_{k}}g_{\varepsilon}(P) - \alpha(\overline{L_{K}})(P) = 0$. Note that with
this choice, we also satisfy the requirement that the $T$--component of $X_{\varepsilon}$
be bounded and bounded away from zero uniformly in $\varepsilon$.

Next, consider the case where $L_{k} \notin \mathcal{N}_{P}$,
i.e. $[L_{k},\overline{L_{k}}]_{T}(P) \neq 0$. We are stuck with the contribution from the
first term on the right hand side of \eqref{T-component}; there is no reason why it should
be small. But now the term containing $[{L_{k}},\overline{L_{k}}]_{T}(P)$ comes to the
rescue. Indeed, if we choose the constant $b_{\varepsilon,k}$ as
\begin{equation}\label{b-epsilon}
b_{\varepsilon,k} = \frac{e^{h(P)}\left(\overline{L_{k}}g_{\varepsilon}(P) -
\alpha(\overline{L_{K}})(P)\right)}{ [L_{k},\overline{L_{k}}]_{T}(P)} \; ,
\end{equation}
then the right hand side of \eqref{T-component} vanishes.

The conclusion is that when we define $X_{\varepsilon,P} := e^{h}T +
\sum_{L_{j}\notin \mathcal{N}_{P}}\left(b_{\varepsilon,j}L_{j} +
\overline{b_{\varepsilon,j}L_{j}}\right)$ then
$[X_{\varepsilon,P},\overline{L_{j}}]_{T}(P) = 0$ for $1\leq j\leq (m-1)$. By continuity,
there is a neighborhood $V_{\varepsilon}$ of $P$ such that for $z\in V_{\varepsilon}$, we
have $\left |[X_{\varepsilon,P},\overline{Z}]_{T}(z)\right | < \varepsilon$ for any
section $Z$ of $T^{1,0}(M)$ of unit length. Choose finitely many points $P_{1}, \cdots,
P_{r}$ such that the corresponding neighborhoods $V_{\varepsilon} =
V_{\varepsilon,P_{r}}$ cover $M$, and let $\{\phi_{1}, \cdots, \phi_{r}\}$ be a partition
of unity subordinate to this cover. We set
\begin{equation}\label{X-epsilon1}
X_{\varepsilon} = \sum_{j=1}^{r}\phi_{j} X_{\varepsilon,P_{j}} =  e^{h}T +
\sum_{j=1}^{r}\phi_{j}\left(Y_{\varepsilon,P_{j}} +
\overline{Y_{\varepsilon,P_{j}}}\right) \; ,
\end{equation}
where $Y_{\varepsilon,P_{j}} = \sum_{L_{s}\notin\mathcal{N}_{P_{s}}}
b_{\varepsilon,s}L_{s}$ (for simplicity of notation, we have not added subscripts to the
$L$'s and the $b$'s to indicate the dependence on the point $P$ (or $P_{s}$)).
Then, if $Z$ is a field of type $(1,0)$ on $M$ of unit length,
\begin{equation}\label{finalcom}
\left[X_{\varepsilon},\overline{Z}\right]_{T} =
\sum_{j=1}^{r}\phi_{j}\left[X_{\varepsilon,P_{j}},\overline{Z}\right]_{T} -
\sum_{j=1}^{r}\overline{Z}\phi_{j}\left(X_{\varepsilon,P_{j}}\right)_{T} \; .
\end{equation}
Each term in the first sum on the right hand side of \eqref{finalcom} is at most
$\phi_{j}\varepsilon$ in absolute value. Therefore, the sum is no more than $\varepsilon$
in absolute value. In the second sum, note that $\left(X_{\varepsilon,P_{j}}\right)_{T} =
e^{h}$, independently of $j$, $1\leq j\leq r$. Combining this with
$\sum_{j=1}^{r}\overline{Z}\phi_{j} = \overline{Z}\left(\sum_{j=1}^{r}\phi_{j}\right) =
0$ gives that that sum vanishes. Consequently,
$\left[X_{\varepsilon},\overline{Z}\right]_{T}$ has modulus less that $\varepsilon$. Then
so does $\left[X_{\varepsilon},Z\right]_{T}$, the negative of its conjugate (recall that
$T$ is purely imaginary). The family of vector fields
$\{X_{\varepsilon}\}_{\varepsilon>0}$ has all the required properties, and the proof of
Proposition \ref{commutators} is complete.
\end{proof}


\section{Good vector fields and regularity: Proof of Theorem \ref{main}}\label{fields&reg}

The point is that a family of vector fields as in Proposition \ref{commutators} implies
the conclusion in Theorem \ref{main}. This is well known when $M$ is the
boundary of a pseudoconvex domain in $\mathbb{C}^{n}$ (\cite{BoasStraube91}; see in
particular Remark (2)), and it is clear that it should work in the present context.  Theorem \ref{main} is an immediate consequence of  Proposition \ref{commutators} and Theorem \ref{fields-estimates}.

\begin{theorem}\label{fields-estimates}
Let $M$ be a smooth compact orientable pseudoconvex CR submanifold of $\mathbb{C}^{n}$
of hypersurface type. Assume that for every  $\varepsilon>0$, there exists a
smooth real vector field $X_{\varepsilon}$ on $M$ whose $T$--component, mod $T^{1,0}(M)
\oplus T^{0,1}(M)$, is bounded and bounded away from zero, uniformly in $\varepsilon$,
with the property that
\begin{equation}\label{small-commutator1}
[X_{\varepsilon},\overline{Z}] = a_{\varepsilon}T \text{ mod }T^{1,0}(M)\oplus
T^{0,1}(M)
\end{equation}
for a function $a_{\varepsilon}$ of modulus less than $\varepsilon$  for every unit
vector field $Z$ in $T^{1,0}(M)$.
Then for every nonnegative real number $s$,
there is a constant $C_{s}$ such that for all $u \in \mathcal{L}^{2}_{(0,q)}(M)$, $0\leq
q\leq (m-1)$,
\begin{equation}\label{main1*}
\|S_{q}^{\prime}u\|_{s} + \|S_{q}^{\prime\prime}u\|_{s} +
\|H_{q}u\|_{s} \leq C_{s}\|u\|_{s}\;;\hskip 2.3in
\end{equation}
\begin{equation}\label{main2*}
\|u\|_{s} \leq C_{s}\left(\|\overline{\partial}_{M}u\|_{s} +
\|\overline{\partial}_{M}^{*}u\|_{s} + \|u\|\right)\;;\; 1\leq q\leq (m-2)\;;\hskip 0.98in
\end{equation}
\begin{equation}\label{main3*}
\|u\|_{s} \leq C_{s}\left(\|\overline{\partial}_{M}u\|_{s} +
\|\overline{\partial}_{M}^{*}u\|_{s}\right)\;;\; u\perp \mathcal{H}_{q}\;; \hskip2.26in
\end{equation}
\begin{equation}\label{main4*}
\|G_{q}u\|_{s} \leq C_{s}\|u\|_{s}\;. \hskip 3.58in
\end{equation}
\end{theorem}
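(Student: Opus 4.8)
The plan is to prove the a priori estimate first: for $u$ smooth (and in $\text{dom}(\dbarMstar)$ where relevant), bound $\|u\|_s$ in terms of the right-hand sides, and then remove the smoothness assumption by the standard elliptic-regularization/Friedrichs-mollifier argument. Since $M$ is compact, Sobolev norms are computed in a fixed finite cover by coordinate patches with orthonormal frames $L_1,\dots,L_{m-1}$ for $T^{1,0}(M)$; together with $\overline{L_1},\dots,\overline{L_{m-1}}$ and $T$ these span the (complexified) tangent bundle, so it suffices to control $\|D^s u\|$ for $D$ ranging over these vector fields (via a partition of unity). The heart of the matter is the tangential-derivative estimate: one wants to show, for each nonnegative integer $s$ (non-integer $s$ then follows by interpolation),
\begin{equation}\label{apriori-sketch}
\|u\|_s^2 \lesssim \|\dbarM u\|_s^2 + \|\dbarMstar u\|_s^2 + \|u\|_{s-1}^2 + \|H_q u\|_s^2\;,
\end{equation}
and then remove the lower-order term $\|u\|_{s-1}$ by induction on $s$ (using \eqref{basic1} to start the induction, together with an appropriate handling of the harmonic term as in \eqref{basic1a}). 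Estimate \eqref{apriori-sketch} is obtained by applying the basic estimate \eqref{basic} not to $u$ but to a differentiated form. The natural operator to differentiate with is a tangential pseudodifferential operator $\Lambda^s$ of order $s$, or more concretely an iterated vector-field operator; applying \eqref{basic} to $\Lambda^s u$ reduces matters to controlling the commutators $[\dbarM,\Lambda^s]$ and $[\dbarMstar,\Lambda^s]$.

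The key mechanism is exactly the one foreshadowed in the introduction. In the commutator $[\dbarM,\Lambda^s]u$, the dangerous terms are those in which $\Lambda^s$ hits a coefficient $u_J$ and the extra derivative produced is the ``bad'' $T$-direction (the complex-tangential directions $L_j,\overline{L_j}$ are controlled directly by $\dbarM u$ and $\dbarMstar u$ via \eqref{d-bar} and \eqref{d-bar-*}, as in the classical Kohn–Morrey–Hörmander scheme, up to errors one absorbs). The $T$-direction derivatives are handled by writing $T$-derivatives in terms of $X_\varepsilon$-derivatives, which is legitimate precisely because $(X_\varepsilon)_T$ is bounded away from zero uniformly in $\varepsilon$: one has $T = e^{-g_\varepsilon}X_\varepsilon + (\text{complex tangential})$ up to a smooth bounded factor. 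Then $X_\varepsilon$-derivatives are moved past $\dbarM$ and $\dbarMstar$ at the cost of the commutator \eqref{small-commutator1}, whose $T$-component is $a_\varepsilon T$ with $|a_\varepsilon|<\varepsilon$; this feeds back a term of the form $\varepsilon\|u\|_s^2$ plus lower-order terms, which for $\varepsilon$ small enough is absorbed on the left. (The complex-tangential part of $[X_\varepsilon,\overline Z]$ produces only terms already under control, as in the codimension-one case.) One must iterate this for each of the $s$ derivatives, keeping track that the constant $C$ in \eqref{uniformly transversal} and the requisite smallness of $\varepsilon$ can be chosen uniformly for all orders up to $s$ — here the fact that $C$ in Proposition \ref{commutators} is independent of $\varepsilon$ is what makes the induction close.

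Once \eqref{apriori-sketch} is established and the lower-order term removed by induction, the four stated estimates follow: \eqref{main2*} is \eqref{apriori-sketch} with $\|H_q u\|_s$ replaced by $\|u\|$ using \eqref{basic} and the finite-dimensionality of $\mathcal{H}_q$ (cf. the manipulation \eqref{basic1a}); \eqref{main3*} follows from \eqref{main2*} plus \eqref{basic} when $1\le q\le m-2$, and at the endpoints $q=0,m-1$ directly from the corresponding differentiated basic estimate applied to $u\perp\mathcal{H}_q$ (where one of $\dbarM u$, $\dbarMstar u$ carries all the information); \eqref{main4*} follows from $G_q = (\text{canonical solution operators})$ composed appropriately, i.e. from \eqref{main3*} applied twice as in the identity
\begin{equation}\label{Green}
\|G_q u\|_s \lesssim \|\dbarM G_q u\|_s + \|\dbarMstar G_q u\|_s \lesssim \|\dbarMstar G_q u + \dbarM G_q u\|_s' \lesssim \|u\|_s\;,
\end{equation}
together with \eqref{main1*}; and \eqref{main1*} for the Szegő-type projections follows because, e.g., $S_q'u = \dbarM(\text{canonical solution of }\dbarMstar v = \dbarMstar S_q' u)$-type identities express these projections through $G_q$, $\dbarM$, $\dbarMstar$ acting on $u$, all of which are now Sobolev-continuous. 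The main obstacle is the uniform bookkeeping in the iterated commutator argument — ensuring the $\varepsilon$-smallness needed to absorb the bad term survives being differentiated $s$ times and that all error terms are genuinely of lower order or complex-tangential — but this is precisely the content that Proposition \ref{commutators} (with its $\varepsilon$-independent constant $C$) is designed to supply, and the rest is the standard adaptation of the codimension-one argument of \cite{BoasStraube91}.
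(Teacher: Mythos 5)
Your outline of the a priori estimate is essentially the paper's: control complex-tangential derivatives via the basic estimate, express $T$ through $X_\varepsilon$ (possible since $(X_\varepsilon)_T$ is bounded away from zero uniformly in $\varepsilon$), commute $X_\varepsilon^k$ past $\dbarM$ and $\dbarMstar$ using \eqref{small-commutator1}, and absorb the resulting $\varepsilon\|u\|_k^2$ term for $\varepsilon$ small. However, there is a genuine gap in how you propose to pass from a priori to genuine estimates. ``Standard elliptic-regularization/Friedrichs-mollifier argument'' does not apply here: $\Box_M$ is not elliptic (it misses the $T$-direction), and off the strictly pseudoconvex set it is not even subelliptic, so knowing $\dbarM u,\dbarMstar u \in W^k$ does not a priori place $u$ in $W^k$; mollifying $u$ and estimating the commutator of $\dbarM$ with a tangential mollifier requires exactly the regularity of $u$ one is trying to prove. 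The paper devotes the final portion of the proof to this point and uses a fundamentally different mechanism: via Baracco's one-sided complexification $\widehat{M}$, one approximates $M$ by strictly pseudoconvex CR manifolds $M_\delta=\{z\in\widehat{M}:\rho(z)+\delta e^{A|z|^2}=0\}$ on which the estimates are genuine (Folland--Kohn subellipticity for $m\ge 3$, Kohn's results for $m=2$) and uniform in $\delta$, transfers forms between $M$ and $M_\delta$ by a fixed family of diffeomorphisms, and passes to the weak limit as $\delta\to 0$. Nothing resembling this appears in your proposal, and without it the proof is incomplete. (If you meant Kohn--Nirenberg elliptic regularization $\Box_M+\epsilon\Delta_M$, that could in principle work, but one must verify the vector-field commutator estimates are uniform in $\epsilon$, which is not addressed and is not what the paper does.)

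Secondarily, the endgame is garbled. The displayed chain for $\|G_q u\|_s$ adds $\dbarMstar G_q u$ and $\dbarM G_q u$, which are forms of different degree, and the middle inequality is false; the correct route, and the paper's, is $G_q = Q_q R_{q-1}S_q' + R_q Q_{q+1}S_q''$, so \eqref{main4*} is \emph{deduced from} \eqref{main1*} and \eqref{main3*}. Likewise, deriving \eqref{main1*} ``through $G_q$'' is circular; the paper instead establishes \eqref{main1*} directly by writing $(S_q'+H_q-\mathrm{Id})u=\dbarMstar v$ with $v$ the canonical solution (controlled by \eqref{main3*}), integrating $\dbarMstar$ by parts against $X_\varepsilon^k(S_q'+H_q)u$, and exploiting that $\dbarM$ annihilates the range of $S_q'+H_q$, which is what makes the commutator terms manageable.
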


\smallskip
\begin{proof}
The proof closely follows the arguments in \cite{BoasStraube91}. Some modifications are
needed, however. First, there are non trivial harmonic forms also when
$1\leq q \leq (m-2)$. Second, the case $m=2$ ($n=2$ in \cite{BoasStraube91}) needs extra
attention, as does the passage from a priori estimates to genuine estimates.

First, note that \eqref{main4*} is an easy consequence of \eqref{main1*} and
\eqref{main3*}. Indeed,
\begin{equation}\label{Green}
G_{q} = Q_{q}R_{q-1}S_{q}^{\prime} + R_{q}Q_{q+1}S_{q}^{\prime\prime}\;;\;0\leq q\leq
(m-1)\;,
\end{equation}
see equation (3) in \cite{BoasStraube91}. Here $R_{q}$ denotes the canonical solution operator on the range of
$\overline{\partial}_{M}:\mathcal{L}^{2}_{(0,q)}(M) \rightarrow
\mathcal{L}^{2}_{(0,q+1)}(M)$, with $R_{-1}$ and $R_{m-1}$ understood to be zero, and
$Q_{q}$ similarly denotes the canonical solution operator on the range of
$\overline{\partial}_{M}^{*}:\mathcal{L}^{2}_{(0,q)}(M) \rightarrow
\mathcal{L}^{2}_{(0,q-1)}(M)$, with $Q_{0}$ and $Q_{m-1}$ understood to be zero. \eqref{Green}
holds because the right hand side is zero on $\mathcal{H}_{q}^{\perp}$, maps into $\mathcal{H}_{q}^{\perp}$, and applying $\Box_{q}$ to it
results in $(Id - H_{q})$. \eqref{main3*} implies that the canonical solution operators are continuous in Sobolev norms. Consequently, \eqref{Green} displays $G_{q}$ as a sum of compositions of operators that are continuous in Sobolev norms.

By interpolation, it suffices to prove the
above estimates for integer values of $s$. This is straightforward for \eqref{main1*} and \eqref{main4*}; for \eqref{main2*} and \eqref{main3*}, one uses continuity of the canonical solution operators, obtained for example by combining the solution operators that result from the closed range property (\cite{Baracco1, Baracco2, Nicoara06, KohnNicoara06}) with the continuity of the projections in \eqref{main1*}.

The starting point is an estimate that says that complex tangential
derivatives are under control. Let $k$ be a positive integer, and $Y$ a smooth section of
$T^{1,0}(M) \oplus T^{0,1}(M)$. Then we have
\begin{multline}\label{BS-estimate}
\|Yu\|_{k-1}^{2} \leq C_{k}\left(\|\overline{\partial}_{M}u\|_{k-1}^{2}
+\|\overline{\partial}_{M}^{*}u\|_{k-1}^{2} + \|u\|_{k-1}\|u\|_{k}\right) \; , \\
u \in W^{k}_{(0,q)}(M) \;, \; 0 \leq q \leq (m-1) \;.
\end{multline}
When $M$ is the boundary of a pseudoconvex domain, \eqref{BS-estimate} is in \cite{BoasStraube91} (see Lemma 1 there). The
integration by parts argument used in the proof when $(k-1)=0$ works in the current
situation as well;
the general case then follows by applying the $(k-1)=0$ case to derivatives (as in
\cite{BoasStraube91}).

Fix $\varepsilon > 0$, to be chosen later. We can write derivatives of order $k$ in terms
of $X_{\varepsilon}$ and complex tangential derivatives.
When there is at least one complex tangential derivative, we can commute it to the right
(so it acts first), modulo an error that is of order $\|u\|_{k-1}$. Applying
\eqref{BS-estimate} to these terms, using $\|u\|_{k}\|u\|_{k-1} \leq
(sc)\|u\|_{k}^{2}+(lc)\|u\|_{k-1}^{2}$, using the interpolation inequality
$\|u\|_{k-1}^{2} \leq (sc)\|u\|_{k}^{2}+(lc)\|u\|^{2}$, and absorbing the $\|u\|_{k}^{2}$
term gives
\begin{multline}\label{BS-estimate1}
\|u\|_{k}^{2} \leq C\|X_{\varepsilon}^{k}u\|^{2}  + C_{\varepsilon}\left(
\|\overline{\partial}_{M}u\|_{k-1}^{2} +
\|\overline{\partial}_{M}^{*}u\|_{k-1}^{2} + \|u\|^{2}\right),\\
u\in W^{k}_{(0,q)}(M)\;,
0\leq q\leq (m-1)\;;
\end{multline}
here, the first constant does not depend on $\varepsilon$ (because the $T$--component of
$X_{\varepsilon}$ is bounded and bounded away from zero uniformly in $\varepsilon$).

We first prove \eqref{main2*} at the level of an a priori estimate. That is, we assume that $u$ is known to be in $W^{k}_{(0,q)}(M)$, and show that with this assumption, estimate \eqref{main2*} holds. Thus we assume that
$u$, $\overline{\partial}_{M}u$, and $\overline{\partial}_{M}^{*}u$ are in $W^{k}(M)$
(for the respective form levels); we want to show that \eqref{main2*} holds. The argument follows \cite[pages 1578-1579]{BoasStraube91}. Note that because
$\overline{\partial}_{M}X_{\varepsilon}^{k}u = [\overline{\partial}_{M}, X_{\varepsilon}^{k}]u +
X_{\varepsilon}^{k}\overline{\partial}_{M}u$, $X_{\varepsilon}^{k}u$ is in the domain of $\overline{\partial}_{M}$, and similarly, in the domain of $\overline{\partial}_{M}^{*}$.
Thus, when
$1\leq q \leq (m-2)$, we have for
the first term on the right hand side of \eqref{BS-estimate1}, in view of
\eqref{basic1},
\begin{multline}\label{eq2}
\|X_{\varepsilon}^{k}u\|^{2} \leq
C\left(\|\overline{\partial}_{M}X_{\varepsilon}^{k}u\|^{2} +
\|\overline{\partial}_{M}^{*}X_{\varepsilon}^{k}u\|^{2}\right) +
C_{\varepsilon}\|u\|_{k-1}^{2} \\
\leq C\left(\|[\overline{\partial}_{M}, X_{\varepsilon}^{k}]u\|^{2}
+ \|[\overline{\partial}_{M}^{*}, X_{\varepsilon}^{k}]u\|^{2}\right
) + C_{\varepsilon}\left(\|\overline{\partial}_{M}u\|_{k}^{2} +
\|\overline{\partial}_{M}^{*}u\|_{k}^{2} + \|u\|_{k-1}^{2}\right)\;;
\end{multline}
we have used that $\|X_{\varepsilon}^{k}u\|_{-1}^{2} \leq C_{\varepsilon}\|u\|_{k-1}^{2}$.
Assumption \eqref{small-commutator1} in Theorem \ref{fields-estimates} now lets us
estimate the commutators.
We use that $\left[\overline{\partial}_{M}, X_{\varepsilon}^{k}\right] =
X_{\varepsilon}^{k-1}\left[\overline{\partial}_{M}, X_{\varepsilon}\right] +$ terms
of order not exceeding $(k-1)$ (\cite{DerridjTartakoff76}, Lemma 2, p.418;
\cite{Straube10},
formula (3.54), p. 65). By \eqref{small-commutator1}, $\left[\overline{\partial}_{M},
X_{\varepsilon}\right] = a_{\varepsilon}T + Y_{\varepsilon}$, with $|a_{\varepsilon}| \leq
\varepsilon$ and $Y_{\varepsilon}$ complex tangential. Thus, in view of
\eqref{BS-estimate} and because the $T$--component of $X_{\varepsilon}$ is bounded
independently of $\varepsilon$,
\begin{multline}\label{eq1a}
\|[\overline{\partial}_{M}, X_{\varepsilon}^{k}]u\|^{2} \lesssim
\varepsilon^{2}\|T^{k}u\|^{2} + C_{\varepsilon}\left(\|Y_{\varepsilon}u\|_{k-1}^{2} +
\|u\|_{k-1}^{2}\right) \\
\lesssim \varepsilon^{2}\|u\|_{k}^{2} +
C_{\varepsilon}\left(\|\overline{\partial}_{M}u\|_{k-1}^{2}
+\|\overline{\partial}_{M}^{*}u\|_{k-1}^{2} + \|u\|_{k-1}\|u\|_{k}\right) \; .
\end{multline}
Inserting this estimate, and the analogous one for
$\|[\overline{\partial}_{M}^{*}, X_{\varepsilon}^{k}]u\|^{2}$, into \eqref{eq2}, and the
result into \eqref{BS-estimate1}, and using $\|u\|_{k-1}\|u\|_{k}  \leq (sc)\|u\|_{k}^{2}
+ (lc)\|u\|_{k-1}^{2} \leq (sc)\|u\|_{k}^{2} + (lc)\|u\|^{2}$, we obtain
\begin{equation}\label{eq5}
\|u\|_{k}^{2} \leq \varepsilon C\|u\|_{k}^{2} +
C_{\varepsilon}\left(\|\overline{\partial}_{M}u\|_{k}^{2} +
\|\overline{\partial}_{M}^{*}u\|_{k}^{2} + \|u\|^{2}\right)\;.
\end{equation}
Choosing $\varepsilon$ small enough and absorbing the $\|u\|_{k}^{2}$ term into the left
hand side of \eqref{eq5} establishes \eqref{main2*} (at the a priori level).

We now treat \eqref{main3*}, i.e. the cases $q=0$ and $q=(m-1)$ (we already mentioned
that when $1\leq q\leq (m-2)$, \eqref{main3*} follows trivially from \eqref{main2*},
in view of \eqref{basic}).First,
let $q=(m-1)$, and $u \in (\mathcal{H}_{m-1}(M))^{\perp} =
Im(\overline{\partial}_{M})$. There is a solution operator to
$\overline{\partial}$ that is continuous in both $W^{k}$ and $\mathcal{L}^{2}$--norm. When $m\geq 3$, the canonical
solution will satisfy this estimate by \eqref{main2*}, which is already established. When
$m=2$, such a solution operator is obtained in \cite{KohnNicoara06}, denoted by $\overline{\partial}_{b,t}^{*}N_{t}$ there, see the proof of Theorem 1.1, in particular Step 1 (p.264) (note that the range of $\overline{\partial}_{M}$ is closed in $\mathcal{L}^{2}$, as required in \cite{KohnNicoara06}, by \cite{Baracco1, Baracco2}). Therefore, we can write $u = \overline{\partial}_{M}v$, with $\|v\|_{k}\lesssim \|u\|_{k}$, $\|v\|\lesssim \|u\|$.
\eqref{basic} now gives (as above, the a priori assumptions imply that $X_{\varepsilon}^{k}v$ and $X_{\varepsilon}^{k}u$ are in the domains of $\overline{\partial}_{M}$ and $\overline{\partial}_{M}^{*}$, respectively)
\begin{multline}\label{eq3}
\|X_{\varepsilon}^{k}u\|^{2} \lesssim
\|\overline{\partial}_{M}^{*}X_{\varepsilon}^{k}u\|^{2} +
\|H_{m-1}X_{\varepsilon}^{k}u\|^{2}\\
= \|\overline{\partial}_{M}^{*}X_{\varepsilon}^{k}u\|^{2} +
\|H_{m-1}X_{\varepsilon}^{k}\overline{\partial}_{M}v\|^{2}
\lesssim \|[\overline{\partial}_{M}^{*}, X_{\varepsilon}^{k}]u\|^{2} +
\|[X_{\varepsilon}^{k},\overline{\partial}_{M}]v\|^{2} +
C_{\varepsilon}\|\overline{\partial}_{M}^{*}u\|_{k}^{2}\;.
\end{multline}
In the last estimate, we have used that the projection $H_{m-1}$ annihilates the range of
$\overline{\partial}_{M}$, and then that it is norm non-increasing. The commutator terms
on the right hand side of \eqref{eq3} can now be treated as before.

Finally, let $q = 0$. We want to use essentially the same argument, but with the role of $\overline{\partial}_{M}$ played by $\overline{\partial}_{M}^{*}$. When $m\geq 3$, there is no difficulty in doing so. When
$m=2$, note that the canonical solution $v$ to $\overline{\partial}_{M}^{*}v = u$ satisfies Sobolev estimates by the case $q=(m-1)=1$ already shown (this latter observation comes from \cite{BoasStraube91}). The proof of \eqref{main3*}, at the a priori level, is now also complete.

\smallskip
Now we prove \eqref{main1*}, also at the a priori level. First note that when $1\leq
q\leq (m-2)$, $H_{q}$ trivially satisfies Sobolev estimates, in view of \eqref{main2*}
(which we already established):
$\|Hu\|_{s} \lesssim \|Hu\| \leq \|u\| \leq \|u\|_{s}$.
We now look at the projections $S_{q}^{\prime}+H_{q}$ for $0\leq q\leq (m-2)$.
This argument is essentially from \cite[page 1580]{BoasStraube91}.

We need to estimate $\|(S_{q}^{\prime}+H_{q})u\|_{k}$ by $\|u\|_{k}$.
In view of \eqref{BS-estimate1}, it suffices to estimate
$\|X_{\varepsilon}^{k}(S_{q}^{\prime}+H_{q})u\|$ (note that
$\overline{\partial}_{M}(S_{q}^{\prime}+H_{q})u = 0$, and
$\overline{\partial}_{M}^{*}(S_{q}^{\prime}+H_{q})u = \overline{\partial}_{M}^{*}u$).
Now $(S_{q}^{\prime}+H_{q}-Id))u = S_{q}^{\prime\prime}u = \overline{\partial}_{M}^{*}v$,
where $v$ is the canonical solution to $\overline{\partial}_{M}^{*}$. Then $\|v\| \leq
\|(S_{q}^{\prime}+H_{q}-Id))u\|$, and $\|v\|_{k} \leq
\|(S_{q}^{\prime}+H_{q}-Id))u\|_{k}$, by \eqref{main3*} for $s=0$ and $s=k$, respectively.
This gives
\begin{multline}\label{eq4}
\left(X_{\varepsilon}^{k}(S_{q}^{\prime}+H_{q})u,
X_{\varepsilon}^{k}(S_{q}^{\prime}+H_{q})u\right) =
\left(X_{\varepsilon}^{k}S_{q}^{\prime\prime}u, X_{\varepsilon}^{k}(S_{q}+H_{q})u\right)
+
O_{\varepsilon}\left(\|u\|_{k}\|X_{\varepsilon}^{k}(S_{q}^{\prime}+H_{q})u\|\right) \\
= \left(\overline{\partial}_{M}^{*}X_{\varepsilon}^{k}v,
X_{\varepsilon}^{k}(S_{q}^{\prime}+H_{q})u\right)
- \left([\overline{\partial}_{M}^{*}, X_{\varepsilon}^{k}]v,
X_{\varepsilon}^{k}(S_{q}^{\prime}+H_{q})u\right) +
O_{\varepsilon}\left(\|u\|_{k}\|X_{\varepsilon}^{k}(S_{q}^{\prime}+H_{q})u\|\right) \\
= \left(X_{\varepsilon}^{k}v, [\overline{\partial}_{M},
X_{\varepsilon}^{k}](S_{q}^{\prime}+H_{q})u\right)
- \left([\overline{\partial}_{M}^{*}, X_{\varepsilon}^{k}]v,
X_{\varepsilon}^{k}(S_{q}^{\prime}+H_{q})u\right) +
O_{\varepsilon}\left(\|u\|_{k}\|X_{\varepsilon}^{k}(S_{q}^{\prime}+H_{q})u\|\right) ;
\end{multline}
we have used in the last equality that $\overline{\partial}_{M}$ annihilates the range of
$(S_{q}^{\prime}+H_{q})$.  The commutator terms in the last line of \eqref{eq4} can now
again be handled by the methods used above.

Because $S_{0}^{\prime}=0$, we have Sobolev estimates also for $H_{0}$ and thus for
$S_{0}^{\prime\prime}$. Because of the estimates for $H_{q}$ when $1\leq q\leq (m-2)$, we
also have estimates for $S_{q}^{\prime}$ for this range of $q$, and then also for
$S_{q}^{\prime\prime}$. Finally, when $q=(m-1)$, the argument is analogous, writing $(Id -
H_{m-1})u = S_{m-1}^{\prime}u$ as $\overline{\partial}_{M}v$. This completes the proof of
\eqref{main1*} (at the a priori level).

\smallskip
We next show that if $M$ happens to be strictly pseudoconvex, the estimates in Theorem \ref{fields-estimates} are genuine estimates. That is, we do not need to \emph{assume} that the forms on the left hand sides are in the appropriate Sobolev spaces. 
\cite{FollandKohn72}, Proposition 5.4.11 and Theorem 5.4.12
imply that (in the strictly pseudoconvex case) when $\overline{\partial}_{M}u$ and $\overline {\partial}_{M}^{*}u$ are in $W^{k}_{(0,q+1)}(M)$ and $W^{k}_{(0,q-1)}(M)$, respectively, then indeed $u \in W^{k}_{0,q)}(M)$. This takes care of \eqref{main2*}. When $1\leq q \leq (m-2)$, \eqref{main3*} follows from \eqref{main2*} (because when $u\perp\mathcal{H}_{q}$, $\|u\|$ is dominated by the right hand side of \eqref{main3*}). More generally, when $m\geq 3$, these estimates follow at all form levels from the subelliptic estimates for $\Box_{M,q}$ for $1\leq q \leq (m-2)$ (\cite{FollandKohn72}, Theorem 5.4.12). When
$m=2$ the canonical solution operators are $1/2$--subelliptic, by \cite{Kohn85}. Only the statements for $\overline{\partial}_{M}$ are given, but the estimates for $\overline{\partial}_{M}^{*}$ can be obtained by these methods as well (\cite{Kohn13}). Thus when the right hand side of \eqref{main3*} is finite, $u$ does belong to $W^{k}_{(0,q)}(M)$. As for \eqref{main1*}, note that in view of \eqref{main2*} already established, all three projections take smooth forms to smooth forms. Therefore, the a priori estimate established is a genuine estimate on smooth forms. But smooth forms are dense in $W^{k}_{(0,q)}(M)$, so that the estimate carries over to $W^{k}_{(0,q)}(M)$.

\medskip
We are now ready to remove the a priori assumptions for all $M$ as in Theorem \ref{fields-estimates}.
Let $\rho$ be the function from section
\ref{plush-proof}. As noted there (Lemma \ref{nonzero}), we can think of $\rho$ as a one--sided defining function for $M$ on
$\widehat{M}$. Then there exists a constant $A>0$ such that for $\delta >0$ small enough,
the CR submanifolds $M_{\delta} := \{z\in \widehat{M}\,|\,\rho(z) + \delta e^{A|z|^{2}} =
0\}$ are strictly pseudoconvex (and oriented, of hypersurface type). The construction is
the same as in \cite{Kohn73} (or see \cite{Straube10}, Lemma 2.5). Namely, one can cover
$M$ with finitely many local coordinate patches and get a constant $A$ for each patch; the
maximum will then work globally. The $M_{\delta}$ are the level sets of $\rho_{A}:=
e^{-A|z|^{2}}\rho$. As in \cite{BoasStraube91}, we fix a set of diffeomorphisms between
$M$ and $M_{\delta}$ (always for $\delta>0$ small enough) by the flow along the gradient
of $\rho_{A}$ (on $\widehat{M}$). Locally, we have bases $\{L_{j,\delta}$, $1\leq (m-1)$,
for $T^{1,0}(M_{\delta})$ that vary smoothly with $\delta$, and corresponding dual bases
$\{\omega_{j,\delta}\,|\,1\leq j\leq (m-1)\}$. We can now transfer forms on $M$ to forms
on $M_{\delta}$ coefficient wise in these charts (this also involves fixing a suitable
partition of unity). For a form $u$ on $M$, we denote by $u_{\uparrow\delta}$ the form obtained on $M_{\delta}$.
Similarly, $v^{\downarrow\delta}$ denotes the form on $M$ obtained from a form $v$ on $M_{\delta}$.
The vector fields in the family $\{X_{\varepsilon}\}$ live
on $M$. But for each $\varepsilon$, we can extend $X_{\varepsilon}$ into $\widehat{M}$,
near $M$, so that we still have the assumptions in Theorem \ref{fields-estimates}, but
with \eqref{small-commutator1} holding in a (one--sided) neighborhood $U_{\varepsilon}$ of
$M$ in $\widehat{M}$. Observe that if $Z$ denotes a first order differential operator (a vector field)
with coefficients that are smooth on $\widehat{M}$ (up to $M$), tangential to the $M_{\delta}$, and $u_{\delta}$ are a family of forms on $M_{\delta}$ such that the $(u_{\delta})^{\downarrow\delta}$ converge weakly in $W^{k}_{(0,q)}(M)$ to $u$,
then $(Zu_{\delta})^{\downarrow\delta} - Zu$ tends to zero on $M$ as distributions. Finally, by what was said above, the estimates
\eqref{main1*} -- \eqref{main4*} hold on $M_{\delta}$, for $\delta >0$ small enough. Moreover,
inspection of the proofs shows that these estimates are uniform in $\delta$. This latter point
is crucial.

The argument now proceeds almost verbatim as in \cite[page 1581]{BoasStraube91}, with small modifications.
Consider \eqref{main1*} first. Let $u \in W^{k}_{(0,q)}(M)$. Then $u_{\delta} =
S_{q\delta}^{\prime}u_{\delta} + S_{q,\delta}^{\prime\prime}u_{\delta} + H_{q,\delta}u_{\delta}$, where the subscripts on the operators denote operators on $M_{\delta}$. Because of the uniform bounds on these
operators, the transfers of all three terms to $M$ are bounded in $W^{k}_{(0,q)}(M)$ by a multiple of $\|u\|_{k}$. By passing to a suitable subsequence, we may assume that all three terms converge weakly as $\delta \rightarrow 0$. Call the respective weak limits $w_{1}$, $w_{2}$, and $w_{3}$. Then $u = w_{1} + w_{2} + w_{3}$, and $\|w_{1}\|_{k} + \|w_{2}\|_{k} + \|w_{3}\|_{k} \lesssim \|u\|_{k}$. By the observation in the previous paragraph about first order differential operators acting on forms $u_{\delta}$, $w_{3} \in \mathcal{H}_{q}(M)$ (because $H_{q,\delta}u_{\delta} \in \mathcal{H}_{q}(M_{\delta}$). Also, $S_{q,\delta}^{\prime}u_{\delta} = \overline{\partial}_{M_{\delta}}v_{\delta}$, with $\|v_{\delta}\|_{k} \lesssim \|u_{\delta}\|_{k} \lesssim \|u\|$ ($v_{\delta}$ is the canonical solution on $M_{\delta}$). By passing to a further subsequence, we may assume that the $(v_{\delta})^{\downarrow\delta}$ also converge weakly in $W^{k}_{(0,q-1)}(M)$. Call this limit $v$. Then
$(\overline{\partial}_{M_{\delta}}(v_{\delta}))^{\downarrow\delta} - \overline{\partial}_{M}v$ converges to zero (as distributions), so that $w_{1} = \overline{\partial}_{M}v$ (since $\overline{\partial}_{M_{\delta}}(v_{\delta}) = S_{q,\delta}^{\prime}u_{\delta}$). That is, $w_{1}$ is in the range of $\overline{\partial}_{M}$. Similarly, $w_{2}$ is in the range of $\overline{\partial}_{M}^{*}$. Because the Hodge decomposition of $u$ is unique, it follows that $S_{q}^{\prime}u = w_{1}$, $S_{q}^{\prime\prime}u = w_{2}$, and $H_{q}u = w_{3}$. This establishes \eqref{main1*}.

The previous paragraph yields more. First, when $u \in Im(\overline{\partial}_{M})$, then $u = S_{q}^{\prime}u = \overline{\partial}_{M}v$. $v$ is the weak limit, in $W^{k}_{(0,q-1)}(M)$, of the forms $(v_{\delta})^{\downarrow\delta}$. Consequently, $v$ belongs to the range of $\overline{\partial}_{M}^{*}$
(by an argument analogous to the one used to show that $S_{q}^{\prime}u \in Im(\overline{\partial}_{M})$ in the previous paragraph). In other words, $v$ is the canonical solution to $\overline{\partial}_{M}v = u$. This shows that the canonical solution operators to $\overline{\partial}_{M}$ are continuous in $W^{k}(M)$. Continuity of the canonical solution operators to $\overline{\partial}_{M}^{*}$ is established in the same way. Second, when $1 \leq q \leq (m-2)$, $\|H_{\delta}u_{\delta}\|_{k} \lesssim \|u_{\delta}\| \lesssim \|u\|$, so that $\|w_{3}\|_{k} = \|H_{q}u\|_{k} \lesssim \|u\|$.

With these estimates in hand, \eqref{main2*} and \eqref{main3*} are immediate: $\|u\|_{k} \leq \|S_{q}^{\prime}u\|_{k} + \|S_{q}^{\prime\prime}u\|_{k} + \|H_{q}u\|_{k}$. This quantity is bounded by the right hand side of \eqref{main2*} (or \eqref{main3*}), in view of the last paragraph and the fact that $S_{q}^{\prime}u$ and $S_{q}^{\prime\prime}u$ are the canonical solutions to the $\overline{\partial}_{M}$ and $\overline{\partial}_{M}^{*}$--equations with right hand sides $\overline{\partial}_{M}u$ and $\overline{\partial}_{M}^{*}u$, respectively.

The proof of Theorem \ref{fields-estimates} is now complete.

\end{proof}

\bigskip
\emph{Acknowledgment}: The authors are grateful for very useful correspondence from Joseph Kohn and Andreea Nicoara concerning estimates for $\overline{\partial}_{M}$ and $\overline{\partial}_{M}^{*}$ in \cite{Kohn84, KohnNicoara06}, and from Alex Tumanov concerning the construction of the `strip' manifold $\widehat{M}$ via analytic discs. They also thank C.~Denson Hill for a discussion on finite dimensionality vs. triviality of cohomology groups that led them to reference \cite{Brink02}. Finally, they thank the referee for very helpful comments on the exposition.


\bigskip
\providecommand{\bysame}{\leavevmode\hbox to3em{\hrulefill}\thinspace}

\end{document}